\numberwithin{equation}{section}
\theoremstyle{plain}
\newtheorem{theorem}{Theorem}[section]
\newtheorem{lemma}[theorem]{Lemma}
\theoremstyle{plain}
\numberwithin{equation}{section}
\theoremstyle{remark}
\newtheorem{remark}[theorem]{Remark}
\begin{document}

\date{\today}

\title
[zeros of ultraspherical Bessel derivatives]{A note on zeros of derivatives of  ultraspherical Bessel functions }

\author{Tao Jiang}
\address{School of Mathematics and Statistics\\
Anhui Normal University\\
Wuhu, 241002\\ P.R. China}
\email{tjiang@ahnu.edu.cn}

\thanks{This work is supported by the University Annual Scientific Research Plan of Anhui Province (2022AH050173) and the Talent  Cultivation Foundation of Anhui Normal University (2022xjxm036). }

\begin{abstract}
 For any  fixed $\nu\ge 0, \delta\in \mathbb R$ and $x>0$,  we  investigate the positive zeros of the  derivatives $j'_{\nu,\delta}(x)$  and   $y'_{\nu,\delta}(x)$, where
\begin{equation*}
 j_{\nu,\delta}(x)=x^{-\delta}J_{\nu}(x)\quad\text{and} \quad y_{\nu,\delta}(x)=x^{-\delta}Y_{\nu}(x).
\end{equation*}
We derive asymptotic expansions for their $k$-th positive zeros  as $k\rightarrow \infty$.
\end{abstract}

\subjclass[2020]{Primary 33C10, 33F05}

\keywords{Bessel functions, Bessel zeros, McMahon's expansion}

\maketitle
\section{Introduction}
For any fixed $\nu\ge 0, \delta\in \mathbb R$ and $x>0$,  we study the asymptotic expansions of the positive zeros of the following derivatives 
\begin{align*}
j'_{\nu,\delta}(x)=x^{-\delta}(J'_{\nu}(x)-\delta x^{-1}J_{\nu}(x))
\end{align*}
and
\begin{equation*}
 y'_{\nu,\delta}(x)=x^{-\delta}(Y'_{\nu}(x)-\delta x^{-1}Y_{\nu}(x)),
\end{equation*}
where \begin{equation*}
 j_{\nu,\delta}(x)=x^{-\delta}J_{\nu}(x)\quad \text{and}\quad y_{\nu,\delta}(x)=x^{-\delta}Y_{\nu}(x)
\end{equation*}
are the \emph{ultraspherical Bessel functions}, and  $J_{\nu}(x)$ and $Y_{\nu}(x)$ are the Bessel functions of the first and second kind of order $\nu$.

The study of zeros of Bessel functions and their derivatives is significant in  a variety of physical and mathematical problems. One simple application  arises
in  acoustics and electromagnetic theory, where they  help describe the vibrational modes of circular membranes and resonant frequencies in cylindrical cavities, as noted by Gray and  Mathews in \cite{GM:1895}.
In addition, these zeros are crucial for spectral theory, particularly in eigenvalue problems involving the Laplacian in bounded domains.
For instance, the positive  zeros of $j_{\nu,\delta}(x)$ and  $j_{\nu,\delta}'(x)$ when $\nu=n+d/2-1$ and $\delta=d/2-1$ with $ d\ge 2$ and $n\in \mathbb N$ are related to the eigenvalues of the Dirichlet and Neumann Laplacians on the unit  ball (disk when $d=2$) in $\mathbb R^d$, respectively.
 These zeros play a role in the study of Weyl's law for the  Laplacian associated with the unit ball.

The positive zeros of $ j_{\nu,\delta}(x)$ and $ y_{\nu,\delta}(x)$ are the same as the zeros of $ J_{\nu}(x)$ and $ Y_{\nu}(x)$, and formulas for the latter can be found in \cite[P. 371]{abram:1972}.

There are, however, no simple relations connecting the zeros of the derivatives $ j_{\nu,\delta}'(x)$ and $ y_{\nu,\delta}'(x)$.
In \cite[Table~III]{olver:1960}, Olver  presented the  zeros  of  derivatives  of the spherical Bessel functions $ j'_{n+1/2,1/2}(x)$ and $y'_{n+1/2,1/2}(x)$ for $n = 0,1,2,\ldots, 20$   respectively,  with eight-decimal-place accuracy.
Ifantis and Siafarikas in \cite{IS:1988} and Lorch and Szego in \cite{LS:1994}  proved various monotonicity properties of the positive zeros of $j_{\nu,\delta}'(x)$,
yielding lower and upper bounds for these zeros.
 Recently, using the modulus and phase functions, Filonov, Levitin, Polterovich and Sher in \cite{FLPS:2024} obtained an implicit expression  of the positive zeros of  $j_{\nu,\delta}'(x)$ and $y_{\nu,\delta}'(x)$ in terms of inverse values of some elementary functions and derived  bounds  of the zeros.

For the asymptotic expansions of positive zeros,
McMahon in \cite{mcmahon:1894} made significant contributions on  $J_{\nu}(x)$, $Y_{\nu}(x)$, $J_{\nu}'x)$, $Y_{\nu}'(x)$ and  other related Bessel functions. See \cite[P. 371 and P. 441]{abram:1972} or \eqref{m1} and \eqref{m2} in Remark \ref{r1} for  more details.
In \cite{GJWY:2024}, Guo, Wang, Yang and the present author  derived a one-term asymptotic expansion of the  positive zeros of  $j'_{\nu, \delta}(x)$. In this paper, we provide  McMahon-type expansions of the positive zeros of $j'_{\nu, \delta}(x)$ and $y'_{\nu, \delta}(x)$.
Let $a_{\nu,\delta,k}'$ and $b_{\nu,\delta,k}'$ be the $k$-th positive zero (in ascending order of magnitude) of $ j'_{\nu,\delta}(x)$ and $y'_{\nu,\delta}(x)$ respectively; $x=0$ is counted as the first zero of $j'_{\nu,\delta}(x)$ when $\nu \le |\delta|$.
We then have
\begin{theorem}\label{mainresult}
For any fixed $\nu\geq 0$ and  $\delta\in\mathbb{R}$, if  $k$ is sufficiently large, then
 \begin{align}\label{expansions}
 \begin{split}
& a_{\nu,\delta,k}',\,b_{\nu,\delta,k}'\\
&\thicksim\beta'\!-\!\frac{\mu\!+3+8\delta}{8\beta'}\!-\frac{4\!\left(7\mu^2+\!(82+144\delta)\mu-9+144\delta+192\delta^2-128\delta^3\right)}{3(8\beta')^3}\\
& \quad+\frac{C_{\nu,\delta}}{(8\beta')^5}+\frac{\tilde{C}_{\nu,\delta}}{(8\beta')^7}+\cdots,
 \end{split}
 \end{align}
where $\beta'=(k+\frac{\nu}{2}-\frac{3}{4})\pi$ for $a_{\nu,\delta,k}'$, $\beta'=(k+\frac{\nu}{2}-\frac{1}{4})\pi$ for $b_{\nu,\delta,k}'$,\, $\mu=4\nu^2$,
\begin{align*}
&C_{\nu,\delta}=-\frac{32}{15}\Big(83\mu^3+(2075+2920\delta)\mu^2-(3039-9040\delta-10560\delta^2\\
&\,+4480\delta^3)\mu+3537+1800\delta+8640\delta^2+1920\delta^3-12800\delta^4+3072\delta^5\Big),
\end{align*}
and
\begin{align*}
&\tilde{C}_{\nu,\delta}=-\frac{64}{105}\Big(6949\mu^4+(296492+356832\delta)\mu^3
-(1248002-2194080\delta\\
&\quad-2298240\delta^2+743680\delta^3)\mu^2+(7414380+696864\delta+5295360\delta^2\\
&\quad+1917440\delta^3-4945920\delta^4+946176\delta^5)\mu-5853627+913248\delta\\
&\quad+1330560\delta^2+2338560\delta^3
-3225600\delta^4-4902912\delta^5+3555328\delta^6\\
&\quad-491520\delta^7\Big).
\end{align*}
\end{theorem}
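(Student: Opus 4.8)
The plan is to reduce both zero problems to one transcendental equation via the Hankel asymptotics and then invert it by the standard McMahon procedure. For $x>0$ the positive zeros of $j'_{\nu,\delta}$ other than $x=0$ coincide with those of $F_\nu(x):=xJ'_\nu(x)-\delta J_\nu(x)$, and similarly $b'_{\nu,\delta,k}$ is governed by $G_\nu(x):=xY'_\nu(x)-\delta Y_\nu(x)$. I would feed into $F_\nu$ and $G_\nu$ the expansions
\[
J_\nu(x)\sim\sqrt{\tfrac2{\pi x}}\bigl(P\cos\omega-Q\sin\omega\bigr),\qquad
Y_\nu(x)\sim\sqrt{\tfrac2{\pi x}}\bigl(P\sin\omega+Q\cos\omega\bigr),
\]
where $\omega=x-\tfrac{\nu\pi}2-\tfrac\pi4$, $P=P(\nu,x)\sim\sum_{m\ge0}(-1)^m a_{2m}(\nu)x^{-2m}$, $Q=Q(\nu,x)\sim\sum_{m\ge0}(-1)^m a_{2m+1}(\nu)x^{-2m-1}$, and $a_n(\nu)=8^{-n}(n!)^{-1}\prod_{j=1}^n\bigl(4\nu^2-(2j-1)^2\bigr)$. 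Differentiating these series termwise and regrouping the $\cos\omega$- and $\sin\omega$-parts yields
\[
F_\nu(x)\sim\sqrt{\tfrac2{\pi x}}\bigl(\widetilde P\cos\omega-\widetilde Q\sin\omega\bigr),\qquad
G_\nu(x)\sim\sqrt{\tfrac2{\pi x}}\bigl(\widetilde P\sin\omega+\widetilde Q\cos\omega\bigr),
\]
with the \emph{same} pair $\widetilde P=xP'-(\delta+\tfrac12)P-xQ$ and $\widetilde Q=xP+xQ'+(\tfrac12-\delta)Q$. A parity check on the series for $P,Q$ shows that $\widetilde P$ is a power series in $x^0,x^{-2},x^{-4},\dots$ with $\widetilde P(x)\to-\tfrac{\mu+3+8\delta}{8}$, while $\widetilde Q$ is a power series in $x,x^{-1},x^{-3},\dots$ with $\widetilde Q(x)\sim x$; in particular $\widetilde Q(x)\neq0$ for $x$ large.

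Dividing, the zero conditions $\widetilde P\cos\omega=\widetilde Q\sin\omega$ (for $F_\nu$) and $\widetilde P\sin\omega=-\widetilde Q\cos\omega$ (for $G_\nu$) both reduce, for $x$ large, to the single equation
\[
x=\beta'+\arctan\frac{\widetilde P(x)}{\widetilde Q(x)},
\]
with $\beta'=(k+\tfrac\nu2-\tfrac34)\pi$ in the first case and $\beta'=(k+\tfrac\nu2-\tfrac14)\pi$ in the second. The correct integer shift in each case — and the convention that $x=0$ counts as the first zero of $j'_{\nu,\delta}$ when $\nu\le|\delta|$ — I would pin down by comparison with the one-term asymptotics already on record (McMahon's expansions \eqref{m1} and \eqref{m2}, and \cite{GJWY:2024}). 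Since $\widetilde P$ is an even and $\widetilde Q$ an odd Laurent series in $x$, the quotient $\widetilde P/\widetilde Q$ is an asymptotic series in the odd powers $x^{-1},x^{-3},x^{-5},\dots$ only, and because $\arctan$ is odd so is $\arctan(\widetilde P/\widetilde Q)$; this is exactly why \eqref{expansions} contains only odd powers of $1/\beta'$.

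It then remains to solve the displayed equation asymptotically: substitute $x=\beta'+c_1(\beta')^{-1}+c_3(\beta')^{-3}+c_5(\beta')^{-5}+c_7(\beta')^{-7}+\cdots$ and match coefficients of like powers of $1/\beta'$. Concretely I would (i) expand $P,Q$ through $a_7(\nu)$; (ii) assemble $\widetilde P,\widetilde Q$ and the quotient $\widetilde P/\widetilde Q$ to order $x^{-7}$; (iii) compose with the Taylor series of $\arctan$; (iv) iterate — equivalently, apply Lagrange inversion — to read off $c_1,c_3,c_5,c_7$ as polynomials in $\mu=4\nu^2$ and $\delta$. The term $-\tfrac{\mu+3+8\delta}{8\beta'}$ drops out at once from $\widetilde P(x)\to-\tfrac{\mu+3+8\delta}{8}$, $\widetilde Q(x)\sim x$; the next three coefficients produce the $1/(8\beta')^3$ term together with $C_{\nu,\delta}$ and $\tilde C_{\nu,\delta}$. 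I expect this to be the main obstacle — not a conceptual one, but a long and entirely mechanical polynomial computation where slips are easy — so I would carry it out with a computer algebra system and cross-check at $\delta=0$ against the classical McMahon expansions for the zeros of $J'_\nu$ and $Y'_\nu$.

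Finally, to promote the formal series to the genuine asymptotic statement in \eqref{expansions}, I would invoke the explicit remainder bounds that accompany the Hankel expansions of $J_\nu,Y_\nu$ and their derivatives: truncating $P,Q$ after $N$ terms gives $F_\nu(x)=\sqrt{2/\pi x}\,\bigl(\widetilde P_N\cos\omega-\widetilde Q_N\sin\omega\bigr)+O(x^{-N-1/2})$, so $F_\nu$ changes sign in an interval of length $O\bigl((\beta')^{-2N+1}\bigr)$ about each approximate zero produced by (i)--(iv); applying the implicit function theorem (or a contraction/monotonicity argument) to $x\mapsto\beta'+\arctan\bigl(\widetilde P_N(x)/\widetilde Q_N(x)\bigr)$ then shows that, once $k$ is large, the true zero differs from the $N$-term expansion by less than its last retained term, which is the meaning of $\thicksim$ in the statement.
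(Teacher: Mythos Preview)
Your Hankel-expansion and Lagrange-inversion scheme is exactly the route the paper takes in Section~3: it writes $x^{\delta}y'_{\nu,\delta}(x)$ as a combination of $\cos\chi$ and $\sin\chi$ with coefficients built from $P,Q,R,S$, divides, applies $\operatorname{arccot}$ (your $\arctan$), expands, and inverts. The algebra you outline in (i)--(iv) is equivalent to the paper's computation of the $B_i(\nu,\delta)$ followed by \eqref{4}.

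The one genuine gap is the step where you ``pin down the integer shift by comparison with the one-term asymptotics already on record.'' For general $\delta$ these are \emph{not} on record: McMahon's formulas \eqref{m1} and \eqref{m2} cover only $\delta=0$ and the spherical case, and \cite{GJWY:2024} gives the one-term expansion of $a'_{\nu,\delta,k}$ only when $\nu\ge|\delta|$; nothing in the cited literature handles $b'_{\nu,\delta,k}$, or $a'_{\nu,\delta,k}$ with $\nu<|\delta|$. Your contraction argument in the last paragraph produces a zero near each $\beta'$, but does not show that this zero is the $k$-th one rather than the $(k+c)$-th for some fixed offset $c$ depending on $\nu,\delta$; distinguishing these requires a global zero-counting input, not just the local Hankel picture. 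The paper devotes all of Section~2 to precisely this point: Lemmas~\ref{largenu} and~\ref{smallnu} combine the Airy-type asymptotics near the turning point with the exact zero count from \cite[Lemmas~3.1 and~3.2]{FLPS:2024} to prove directly that $b'_{\nu,\delta,k}=(k+\tfrac{\nu}{2}-\tfrac14)\pi+O_\nu(k^{-1})$ (and analogously for $a'_{\nu,\delta,k}$), and this is what fixes the integer in \eqref{2}. Without an independent argument of that kind your expansion is formally correct but the identification of $\beta'$ with $(k+\tfrac{\nu}{2}-\tfrac34)\pi$ resp.\ $(k+\tfrac{\nu}{2}-\tfrac14)\pi$ remains unproved.
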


\begin{remark}\label{r1}
If we take the special case $\delta=0$, then we obtain the McMahon's expansion for the positive zeros of derivatives of the Bessel functions, see \cite[P. 371]{abram:1972}
\begin{align}\label{m1}
\begin{split}
&a_{\nu,0,k}', b_{\nu,0,k}' \\
&\thicksim \beta' - \frac{\mu\! +\! 3}{8\beta'} - \frac{4(7\mu^2 \!+ 82\mu -\! 9)}{3(8\beta')^3} - \frac{32(83\mu^3 + 2075\mu^2-3039\mu + 3537)}{15(8\beta')^5} \\
&\quad- \frac{64(6949\mu^4 +\! 296492\mu^3 -\! 1248002\mu^2\! + 7414380\mu - \! 5853627)}{105(8\beta')^7}+\cdots,
\end{split}
\end{align}
where $\beta' = (k + \frac{\nu}{2} - \frac{3}{4})\pi$ for $a_{\nu,0,k}'$, $\beta' = (k + \frac{\nu}{2} - \frac{1}{4})\pi$ for $b_{\nu,0,k}'$, and  $\mu=4\nu^2$.

If we take the special case $\delta=\frac{1}{2}$ and $\nu=n+\frac{1}{2}$, then we obtain the McMahon's expansion for the positive zeros of derivatives of the spherical Bessel functions,
 see  \cite[P. 441]{abram:1972}
\begin{align}\label{m2}
\begin{split}
  &a_{n+\frac{1}{2},\frac{1}{2},k}',\, b_{n+\frac{1}{2},\frac{1}{2},k}'\\
  &\thicksim\!\beta'\!-\!\frac{\mu\!+\!7}{8\beta'}\!-\!\frac{4(7\mu^2\!+154\mu+95)}{3(8\beta')^3}\!-\!\frac{32(83\mu^3+3535\mu^2+3561\mu+6133)}{15(8\beta')^5}\\
  &\quad-\frac{64(6949\mu^4+474908\mu^3+330638\mu^2\!+9046780\mu-5075147)}{105(8\beta')^7}+\cdots,
\end{split}
\end{align}
where $\beta'=(k+\frac{n}{2}-\frac{1}{2})\pi$ for $a_{n+\frac{1}{2},\frac{1}{2},k}'$, $\beta'=(k+\frac{n}{2})\pi$ for $b_{n+\frac{1}{2},\frac{1}{2},k}'$ and $\mu=4(n+1/2)^2$.
\end{remark}
\begin{remark}
In Theorem \ref{mainresult}, we only provide the first five terms of the expansions for $a_{\nu,\delta,k}'$ and $b_{\nu,\delta,k}'$(which already appear quite complicated).
However, it can be seen from the proof below that we can  obtain  the expansions to arbitrary order of $\frac{1}{\beta'}$.
\end{remark}

\emph{Notations:} For functions $f$ and $g$ with $g$ taking nonnegative real values,
$f\lesssim g$ means $|f|\leqslant Cg$ for some constant $C$. If $f$
is nonnegative, $f\gtrsim g$ means $g\lesssim f$. The notation
$f\asymp g$ means that $f\lesssim g$ and $g\lesssim f$. The Landau notation $f=O(g)$ is equivalent to $f\lesssim g$. If we write a subscript (for instance $O_{\nu}$), we emphasize that the implicit constant depends on that specific subscript. A series $\sum_{l=0}^{\infty}a_lx^{-l}$ is said to be an asymptotic expansion of a function $f(x)$ if
\begin{equation*}
  f(x)-\sum_{l=0}^{n-1}a_lx^{-l}=O(x^{-n}) \quad \text{as} \quad x\rightarrow\infty
\end{equation*}
for every $n=1,2,3,\ldots$, and we write
\begin{equation*}
  f(x)\thicksim\sum_{l=0}^{\infty}a_lx^{-l}.
\end{equation*}
\section{one-term asymptotics}
In this section, we first present some asymptotics of $Y_{\nu}(x)$ and $Y_{\nu}'(x)$ under  different assumpitons on $x$ and $\nu$, and consequently for $y_{\nu,\delta}'(x)$. We then establish the one-term asymptotics of $a_{\nu,\delta,k}'$ and $b_{\nu,\delta,k}'$. The asymptotics \eqref{ynasy1}-\eqref{ynasyNC3} are standard;  see for example \cite[Lemmas A.1 and A.2]{GJWY:2024}. Their proofs rely on the theory of oscillatory integrals and Olver's asymptotic expansions of the Bessel functions. Throughout this paper, we denote  $g(x)=\left(\sqrt{1-x^2}-x\arccos x\right)/\pi$ for $x\in [0,1]$.

%

\begin{lemma}\label{lemma1}
For any $c>0$ and $\nu\ge 0$, if $x\ge \max\{(1+c)\nu,10\}$ then
 \begin{equation}\label{ynasy1}
Y_{\nu}(x)=\frac{2^{1/2}}{\pi^{1/2}\left(x^2-\nu^2\right)^{1/4} }\left(\sin\left( \pi
xg\left(\frac{\nu}{x}\right)-\frac{\pi}{4}\right)+O_{c}\left(x^{-1}\right)\right),
\end{equation}
\begin{equation}\label{ynasyNC1}
Y_{\nu}'(x)=\frac{2^{1/2}\left(x^2-\nu^2\right)^{1/4}}{\pi^{1/2}x}\left(\sin\left( \pi
xg\left(\frac{\nu}{x}\right)+\frac{\pi}{4}\right)+O_{c}\left(x^{-1}\right)\right).
\end{equation}
\end{lemma}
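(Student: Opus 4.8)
The plan is to obtain both asymptotics in Lemma~\ref{lemma1} from Olver's uniform asymptotic expansions of $Y_\nu(x)$ and $Y_\nu'(x)$ for large argument, specialized to the regime $x\ge\max\{(1+c)\nu,10\}$ where the argument stays uniformly bounded away from the turning point $x=\nu$. Write $z=\nu/x\in[0,1/(1+c)]$, so that $\sqrt{1-z^2}\ge\sqrt{c(2+c)}/(1+c)$ is bounded below; the phase that appears is exactly $\pi x g(z)=\sqrt{x^2-\nu^2}-\nu\arccos(\nu/x)$, which is the standard Debye/Hankel phase. First I would recall the Hankel-type (Debye) expansion: for $x>\nu$,
\begin{equation*}
Y_\nu(x)=\left(\frac{2}{\pi(x^2-\nu^2)^{1/2}}\right)^{1/2}\left(\sin\omega\sum_{m\ge0}\frac{u_{2m}(t)}{(x^2-\nu^2)^{m}}+\cos\omega\sum_{m\ge0}\frac{u_{2m+1}(t)}{(x^2-\nu^2)^{m+1/2}}\right),
\end{equation*}
where $\omega=\sqrt{x^2-\nu^2}-\nu\arccos(\nu/x)-\pi/4=\pi x g(\nu/x)-\pi/4$, $t=x/\sqrt{x^2-\nu^2}$, and the $u_k$ are the usual polynomials. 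Since $t$ is bounded on our range, each tail term is $O_c(x^{-1})$ relative to the leading $\sin\omega$, which gives \eqref{ynasy1} after absorbing the $m\ge1$ contributions and the $u_1$-term into the error $O_c(x^{-1})$. The cleanest way to make ``relative to $\sin\omega$'' harmless is to note the prefactor $(x^2-\nu^2)^{-1/4}$ is already pulled out, so the bracket equals $\sin\omega+O_c(x^{-1})$ uniformly.

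For \eqref{ynasyNC1} I would differentiate: $Y_\nu'(x)$ has the companion Debye expansion
\begin{equation*}
Y_\nu'(x)=\left(\frac{2(x^2-\nu^2)^{1/2}}{\pi x^2}\right)^{1/2}\left(\cos\omega\sum_{m\ge0}\frac{v_{2m}(t)}{(x^2-\nu^2)^{m}}-\sin\omega\sum_{m\ge0}\frac{v_{2m+1}(t)}{(x^2-\nu^2)^{m+1/2}}\right),
\end{equation*}
and since $\cos\omega=\cos(\pi xg(\nu/x)-\pi/4)=\sin(\pi xg(\nu/x)+\pi/4)$, the leading term is exactly the claimed one with prefactor $2^{1/2}(x^2-\nu^2)^{1/4}/(\pi^{1/2}x)$; the remaining sums again contribute $O_c(x^{-1})$ because $t$ is bounded. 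Alternatively, and perhaps more in the spirit of the surrounding paper, one can avoid quoting the $Y_\nu'$ expansion separately: differentiate the representation of $Y_\nu$ term by term, using $\frac{d}{dx}(\pi x g(\nu/x))=\sqrt{1-(\nu/x)^2}=\sqrt{x^2-\nu^2}/x$ and $\frac{d}{dx}(x^2-\nu^2)^{-1/4}=O_c(x^{-2})$, so that the dominant contribution comes from differentiating the sine, producing $\cos\omega\cdot\sqrt{x^2-\nu^2}/x$ times the prefactor, and everything else is lower order. Either route requires only that the differentiated error terms remain $O_c(x^{-1})$ after multiplication by the new prefactor, which follows from the same boundedness of $t$.

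The main obstacle is bookkeeping of the error term rather than anything conceptual: one must check that differentiation does not worsen the $O_c(x^{-1})$ — in particular that differentiating the $O_c(x^{-1})$ remainder in \eqref{ynasy1}, which a priori could be $O_c(1)$, is not what is being done; instead one differentiates the full Olver expansion, whose remainder is controlled with its derivative by the standard error bounds (the remainder after $N$ terms and its derivative are both $O_c((x^2-\nu^2)^{-N/2})$ uniformly for $t$ bounded, cf.\ Olver). So I would phrase the proof as: quote the Debye expansions with explicit uniform error bounds valid for $x/\sqrt{x^2-\nu^2}\le t_0$, observe $x\ge(1+c)\nu$ forces $t\le(1+c)/\sqrt{c(2+c)}=:t_0(c)$ and $x\ge10$ keeps $(x^2-\nu^2)^{1/2}\gtrsim x$, then read off the two displayed formulas. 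Since the paper explicitly says these asymptotics are standard and refers to \cite[Lemmas A.1 and A.2]{GJWY:2024}, it would suffice to cite that source and indicate the specialization $x\ge\max\{(1+c)\nu,10\}$; I would include the one-line phase identities $\pi xg(\nu/x)=\sqrt{x^2-\nu^2}-\nu\arccos(\nu/x)$ and $\cos(\theta-\pi/4)=\sin(\theta+\pi/4)$ to connect the cited form to the form stated here.
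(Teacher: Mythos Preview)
Your argument is correct but proceeds along a different line from the paper. The paper derives \eqref{ynasy1}--\eqref{ynasyNC1} directly from the integral representation of $Y_\nu$: it writes $Y_\nu'(x)=\mathrm{Re}\,I_\nu(x)+L_\nu(x)$ with
\[
I_\nu(x)=\frac{1}{\pi}\int_0^\pi e^{ix(\sin\theta-(\nu/x)\theta)}\sin\theta\,\mathrm d\theta,\qquad
L_\nu(x)=\frac{1}{\pi}\int_0^\infty\big(e^{\nu t}+e^{-\nu t}\cos\nu\pi\big)e^{-x\sinh t}\sinh t\,\mathrm d t,
\]
handles $L_\nu$ by two integrations by parts, and applies stationary phase to $I_\nu$ at the unique critical point $\theta_0=\arccos(\nu/x)$; the condition $x\ge(1+c)\nu$ keeps $\theta_0$ uniformly away from the endpoints, which is what produces the $O_c$ uniformity. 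You instead quote the Debye/Olver large-argument expansions with the polynomials $u_k,v_k$, observe that $t=x/\sqrt{x^2-\nu^2}$ stays bounded by a constant depending only on $c$, and read off the leading term plus $O_c(x^{-1})$. Your route is shorter and makes the uniformity transparent through the boundedness of $t$, at the cost of importing the Debye expansions and their uniform remainder estimates as a black box; the paper's route is more self-contained and explains \emph{why} the phase $\pi xg(\nu/x)$ arises, but requires the usual localization and tail estimates of stationary phase. Your remark that one must differentiate the full expansion rather than the $O_c(x^{-1})$ remainder of \eqref{ynasy1} is the right caveat, and the paper sidesteps it in the same spirit by differentiating the integral representation before doing any asymptotics. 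The phase identities you record, $\pi xg(\nu/x)=\sqrt{x^2-\nu^2}-\nu\arccos(\nu/x)$ and $\cos(\theta-\pi/4)=\sin(\theta+\pi/4)$, are exactly what is needed to match your formulation to the statement.
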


\begin{proof}
We only  prove the asymptotics of $Y_{\nu}'(x)$, as the proof for $Y_{\nu}(x)$ is similar.

It is easy to check that if  $x\ge \max\{(1+c)\nu,10\}$, we can obtain $ Y_{\nu}'(x)$ by  differentiating under the integral sign of the integral representation of $Y_{\nu}(x)$. So we have
 \begin{equation*}
 \begin{split}
 Y_{\nu}'(x)=\mathrm{Re}(I_{\nu}(x))+L_{\nu}(x),
 \end{split}
 \end{equation*}
where
\begin{equation*}
  I_{\nu}(x):=\frac{1}{\pi}\int_{0}^{\pi}e^{ix\phi(\theta)}\sin\theta\,\mathrm{d}\theta
\end{equation*}
with the phase function
\begin{equation*}
  \phi(\theta)=\phi(x,\nu,\theta)=\sin\theta-\frac{\nu}{x}\theta
\end{equation*}
and
\begin{equation*}
  L_{\nu}(x):=\frac{1}{\pi}\int_{0}^{\infty}\left(e^{\nu t}+e^{-\nu t}\cos(\nu \pi)\right)e^{-x\sinh t}\sinh t\,\mathrm{d}t.
\end{equation*}
Obviously, applying  integration by parts twice yields
\begin{equation*}
  L_{\nu}(x)=O_c\left(x^{-2}\right).
\end{equation*}

For the integral $I_{\nu}(x)$, we apply the method of stationary
phase to  it in a sufficiently small neighborhood of the critical point $\arccos(\nu/x)$. This yields the contribution
\begin{equation*}
  \frac{2^{1/2}(x^2-\nu^2)^{1/4}}{\pi^{1/2}x}e^{i\left(\pi xg(\frac{\nu}{x})-\frac{\pi}{4}\right)}(1+O_{c}(x^{-1})).
\end{equation*}
The contribution from the part of the integral  away from the critical point  is at most $O_{c}(x^{-2})$, obtained by applying integration by parts twice. Then we have \eqref{ynasyNC1} by taking the real part of $I_{\nu}(x)$.
This completes the proof.
\end{proof}

\begin{lemma}\label{lemma2}
 For any $c>0$ and all sufficiently large $\nu$, if $\nu<x<(1+c)\nu$ then
 \begin{equation}\label{ynasy2}
Y_{\nu}(x)=\frac{-(12\pi xg(\nu/x) )^{1/6}}{(x^2-\nu^2)^{1/4}}\left(\mathrm{Bi}\!\left(\!-\!\left(\frac{3\pi}{2} xg\!\left(\frac{\nu}{x}\right)\!\right)^{\!2/3}\right)+O\left(\nu^{-4/3}\right)\right),
\end{equation}
 \begin{equation}\label{ynasyNC2}
Y_{\nu}'(x)=\frac{2\left(x^2-\nu^2\right)^{1/4}}{\left(12\pi xg\left(\nu/x \right)\right)^{1/6}x}
	\!\left(\!\mathrm{Bi}'\!\left(\!-\!\left(\!\frac{3\pi}{2} x g\!\left(\frac{\nu}{x} \right)\!\right)^{\!2/3}\right)\!+O\left(\nu^{-2/3}\right)\!\right)
\end{equation}
 when $xg(\nu/x)\leq C$ for a large constant $C>0$, and
\begin{equation}\label{ynasy3}
Y_{\nu}(x)=\frac{2^{1/2}}{\pi^{1/2} \left(x^2-\nu^2\right)^{1/4}} \left(\sin\left(\pi xg\!\left(\frac{\nu}{x}\right)-\frac{\pi}{4}\right)
+O\left(\!\left(xg\left(\frac{\nu}{x}\right)\right)^{\!-1}\right)\!\right),
\end{equation}
\begin{equation}\label{ynasyNC3}
Y_{\nu}'(x)=\frac{2^{1/2}\left(x^2-\nu^2\right)^{1/4}}{\pi^{1/2}x} \left(\sin\left(\pi xg\!\left(\frac{\nu}{x}\right)+\frac{\pi}{4}\right)
+O\left(\!\left(xg\left(\frac{\nu}{x}\right)\right)^{\!-1}\right)\!\right)
\end{equation}
when $xg(\nu/x)>C$. Here the implicit constants only depends on $c$ and $C$.
\end{lemma}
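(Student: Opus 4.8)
The plan is to deduce all four expansions from Olver's uniform Airy-type asymptotic expansions of $Y_{\nu}$ and $Y_{\nu}'$, which are valid uniformly across the turning point $x=\nu$; this is exactly the argument carried out in \cite[Lemmas A.1 and A.2]{GJWY:2024}. Write $x=\nu z$ with $1<z<1+c$ and let $\zeta=\zeta(z)<0$ be Olver's variable, determined for $z>1$ by $\tfrac23(-\zeta)^{3/2}=\sqrt{z^{2}-1}-\arccos(1/z)$. Since
\[
\pi x g(\nu/x)=\sqrt{x^{2}-\nu^{2}}-\nu\arccos(\nu/x)=\nu\bigl(\sqrt{z^{2}-1}-\arccos(1/z)\bigr),
\]
this gives the key identity $\nu^{2/3}\zeta=-\bigl(\tfrac{3\pi}{2}x g(\nu/x)\bigr)^{2/3}$, i.e.\ precisely the argument of the Airy functions appearing in \eqref{ynasy2} and \eqref{ynasyNC2}. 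Under this substitution a direct computation with fractional powers shows that Olver's prefactors $\bigl(4\zeta/(1-z^{2})\bigr)^{1/4}$ (for $Y_{\nu}$) and $\tfrac2z\bigl((1-z^{2})/(4\zeta)\bigr)^{1/4}$ (for $Y_{\nu}'$), after absorbing the powers of $\nu$ attached to the dominant Airy term and, in the derivative case, the factor $\tfrac2z=\tfrac{2\nu}{x}$, collapse to $-(12\pi x g(\nu/x))^{1/6}(x^{2}-\nu^{2})^{-1/4}$ and $2(x^{2}-\nu^{2})^{1/4}\bigl((12\pi x g(\nu/x))^{1/6}x\bigr)^{-1}$ respectively; the numerical constants reduce because $2^{1/2}(3/2)^{1/6}=12^{1/6}$.

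Then I would split into the two regimes. When $x g(\nu/x)\le C$, the argument $\nu^{2/3}\zeta$ ranges over a fixed compact subset of $(-\infty,0]$, so $\mathrm{Bi}$, $\mathrm{Bi}'$ evaluated there are $O(1)$ and Olver's coefficient functions are analytic, hence bounded, near $\zeta=0$; truncating each expansion after its leading term and pulling out the prefactor then gives \eqref{ynasy2} and \eqref{ynasyNC2}, with error exponents $-4/3$ and $-2/3$, these being the gaps $\nu^{-5/3}/\nu^{-1/3}$ and $\nu^{-4/3}/\nu^{-2/3}$ between the dominant and subdominant series in the two expansions. When $x g(\nu/x)>C$, the argument $\nu^{2/3}\zeta$ is large and negative, and one inserts the classical oscillatory asymptotics $\mathrm{Bi}(-t)=\pi^{-1/2}t^{-1/4}\bigl(\cos(\tfrac23 t^{3/2}+\tfrac\pi4)+O(t^{-3/2})\bigr)$ and $\mathrm{Bi}'(-t)=\pi^{-1/2}t^{1/4}\bigl(\sin(\tfrac23 t^{3/2}+\tfrac\pi4)+O(t^{-3/2})\bigr)$ with $t=\bigl(\tfrac{3\pi}{2}x g(\nu/x)\bigr)^{2/3}$, so that $\tfrac23 t^{3/2}=\pi x g(\nu/x)$; the identity $\cos(\theta+\tfrac\pi4)=-\sin(\theta-\tfrac\pi4)$ turns the $Y_{\nu}$ leading term into $\sin(\pi x g(\nu/x)-\tfrac\pi4)$ while $Y_{\nu}'$ already produces $\sin(\pi x g(\nu/x)+\tfrac\pi4)$, and the prefactors simplify via $(12\pi x g(\nu/x))^{1/6}t^{\mp1/4}=2^{\pm1/2}$, which yields \eqref{ynasy3} and \eqref{ynasyNC3}.

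The step to watch is the error bookkeeping in the second regime: besides the $O(t^{-3/2})=O((x g(\nu/x))^{-1})$ error inherited from the Airy expansion, Olver's subdominant series contribute a relative error $O(\nu^{-4/3})$ (for $Y_{\nu}$) and $O(\nu^{-2/3})$ (for $Y_{\nu}'$), carried by a $\mathrm{Bi}'$ (resp.\ $\mathrm{Bi}$) term that brings an extra factor $t^{1/4}\asymp(x g(\nu/x))^{1/6}$. To see that all of these collapse into the single stated error $O((x g(\nu/x))^{-1})$ one uses the elementary bound $x g(\nu/x)\le\pi^{-1}\sqrt{x^{2}-\nu^{2}}\lesssim_{c}\nu$, valid throughout $\nu<x<(1+c)\nu$, whence $t^{1/4}\nu^{-4/3}\lesssim(x g(\nu/x))^{1/6}\nu^{-4/3}\lesssim(x g(\nu/x))^{-1}$ and likewise for the $Y_{\nu}'$ term; the same bound makes the implicit constants depend only on $c$ and $C$. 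Everything else is a routine manipulation of Olver's expansions, and replacing $\mathrm{Bi}$ by $\mathrm{Ai}$ throughout gives the analogous statements for $J_{\nu}$ and $J_{\nu}'$ in exactly the same way.
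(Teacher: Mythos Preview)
Your proposal is correct and follows essentially the same route as the paper: both arguments start from Olver's uniform Airy-type expansion for $Y_\nu'(\nu z)$, make the identification $\nu^{2/3}\zeta=-\bigl(\tfrac{3\pi}{2}xg(\nu/x)\bigr)^{2/3}$, split according to whether $xg(\nu/x)$ is bounded or large, and in the latter case insert the oscillatory asymptotics of $\mathrm{Bi},\mathrm{Bi}'$ together with the bound $xg(\nu/x)\lesssim\nu$ to absorb the subdominant contributions. One small bookkeeping slip: in the second regime the subdominant $\mathrm{Bi}'$ term picks up an extra factor $t^{1/2}$ (not $t^{1/4}$) relative to the main $\sin$ term, since $\mathrm{Bi}'(-t)/\mathrm{Bi}(-t)\asymp t^{1/2}$, but this is harmless because $t^{1/2}\nu^{-4/3}\asymp (xg(\nu/x))^{1/3}\nu^{-4/3}\lesssim\nu^{-1}\le (xg(\nu/x))^{-1}$ still holds under $xg(\nu/x)\lesssim\nu$.
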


\begin{proof}
The asymptotics are consequences of  Olver's asymptotic expansions of  Bessel functions  and well-known asymptotics of Airy functions. We only  prove the asymptotics of $Y_{\nu}'(x)$, as the proof for $Y_{\nu}(x)$ is similar.

For sufficiently large $\nu$, applying  Olver’s asymptotics of $Y_{\nu}'(\nu z)$ ( see \cite[P. 369]{abram:1972}) we obtain
\begin{equation}\label{Olverexpansion}
  Y_{\nu}'(x)= \! \frac{2}{x/\nu}\!\!\left(\!\frac{1\!-\!(x/\nu)^2}{4\zeta}\!\right)^{\!1/4}\!\!\left(
  \frac{\mathrm{Bi}'(\nu^{2/3}\zeta)}{\nu^{2/3}}\!\left(1\!+O\!\left(\nu^{-2}\right)\right)
  +\frac{\mathrm{Bi}(\nu^{2/3}\zeta)}{\nu^{4/3}}O(1)\!\right)
\end{equation}
with $\zeta=\zeta(x)$ determined by
\begin{equation*}
\frac{2}{3}(-\zeta)^{3/2}=\sqrt{\left(\frac{x}{\nu}\right)^2-1}-\arccos\frac{\nu}{x}
\end{equation*}
as $1<x/\nu<1+c$. Notice that
\begin{equation*}
\nu^{2/3}\zeta=-\left(\frac{3\pi}{2}xg\left(\frac{\nu}{x}\right)\right)^{2/3}.
\end{equation*}
Then for a large constant $C>0$, if  $xg(\nu/x)\leq C$, we obtain \eqref{ynasyNC2} by  continuity of $\mathrm{Bi}(-z)$ and $\mathrm{Bi}'(-z)$.

If $xg(\nu/x)> C$, combining \eqref{Olverexpansion}, the  asymptotics of $\mathrm{Bi}(-z)$ and $\mathrm{Bi}'(-z)$ (see \cite[P. 449]{abram:1972}) and the fact $xg(\nu/x)\lesssim \nu$ yields \eqref{ynasyNC3}. This completes the proof.
\end{proof}

\begin{lemma}
For any $c>0$ and $\nu\ge 0$, if $x\geq \max\{(1+c)\nu, 10\}$ then
\begin{equation}\label{y1}
y_{\nu,\delta}'(x)= \frac{2^{1/2}\left(x^2-\nu^2\right)^{1/4}}{\pi^{1/2}x^{1+\delta}} \left(\sin\left( \pi xg\left(\frac{\nu}{x}\right)+\frac{\pi}{4}\right)+O_c\left(x^{-1}\right)\right).
\end{equation}

For any sufficiently small $c>0$ and sufficiently large $\nu$, if $\nu<x<(1+c)\nu$ then
\begin{equation}\label{y2} y_{\nu,\delta}'(x)=\frac{2\left(x^2-\nu^2\right)^{1/4}}{\left(12\pi x g\left(\nu/x \right)\right)^{1/6}x^{1+\delta}}
	\!\left(\!\mathrm{Bi}'\!\left(\!-\!\left(\!\frac{3\pi}{2} x g\!\left(\frac{\nu}{x} \right)\!\right)^{\!\frac{2}{3}}\right)\!+O\left(\nu^{-\frac{2}{3}}\right)\!\right)
\end{equation}
when $xg(\nu/x)\leq C$ for a large constant $C>0$, and
\begin{equation}\label{y3}	y_{\nu,\delta}'(x)=\frac{2^{1/2}\left(x^2-\nu^2\right)^{1/4}}{\pi^{1/2}x^{1+\delta}} \left(\sin\left( \pi xg\!\left(\frac{\nu}{x}\right)+\frac{\pi}{4}\right)+O\left(\!\left(xg\left(\frac{\nu}{x}\right)\right)^{\!-1}\right)\!\right)
\end{equation}
when $xg(\nu/x)>C$.  Here the implicit constants only depends on $c$ and $C$.
\end{lemma}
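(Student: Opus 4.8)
The plan is to derive all three estimates from Lemmas~\ref{lemma1} and~\ref{lemma2} via the identity
\[
y_{\nu,\delta}'(x)=x^{-\delta}\bigl(Y_\nu'(x)-\delta x^{-1}Y_\nu(x)\bigr),
\]
which is immediate from $y_{\nu,\delta}(x)=x^{-\delta}Y_\nu(x)$ and the product rule. The whole content is then to check that in each of the three regimes the correction $\delta x^{-1}Y_\nu(x)$ is dominated by $Y_\nu'(x)$: more precisely, after factoring out the amplitude of $Y_\nu'(x)$, the correction contributes only an extra error of the same order as the one already present in the asymptotics of $Y_\nu'$ supplied by Lemmas~\ref{lemma1} and~\ref{lemma2}.

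The one analytic ingredient is a comparison of the amplitudes of $Y_\nu$ and $Y_\nu'$. From the definition of $g$ one has the exact identity
\[
\pi x\,g(\nu/x)=\sqrt{x^2-\nu^2}-\nu\arccos(\nu/x)\le\sqrt{x^2-\nu^2},
\]
valid for $\nu\le x$, whence $(x^2-\nu^2)^{-1/2}\lesssim\bigl(xg(\nu/x)\bigr)^{-1}$; moreover $(x^2-\nu^2)^{-1/2}\asymp_c x^{-1}$ when $x\ge(1+c)\nu$. For \eqref{y1} I would substitute \eqref{ynasy1} and \eqref{ynasyNC1}, and for \eqref{y3} the estimates \eqref{ynasy3} and \eqref{ynasyNC3}; in both cases, dividing the amplitude of $\delta x^{-1}Y_\nu(x)$ by that of $Y_\nu'(x)$ yields exactly the factor $\delta(x^2-\nu^2)^{-1/2}$, which is $O_c(x^{-1})$ in the range of \eqref{y1} and $O\bigl((xg(\nu/x))^{-1}\bigr)$ in the range of \eqref{y3}. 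Thus $\delta x^{-1}Y_\nu(x)$ equals the amplitude of $Y_\nu'(x)$ times $O$ of the respective displayed error, it merges into that error term, and multiplying through by $x^{-\delta}$ produces \eqref{y1} and \eqref{y3}.

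For the transition regime $\nu<x<(1+c)\nu$ with $xg(\nu/x)\le C$, where \eqref{y2} rests on the Airy-type estimates \eqref{ynasy2} and \eqref{ynasyNC2}, the ratio of the amplitude of $\delta x^{-1}Y_\nu(x)$ to that of $Y_\nu'(x)$ is a constant times $\delta\bigl(xg(\nu/x)\bigr)^{1/3}(x^2-\nu^2)^{-1/2}$. To make this $O(\nu^{-2/3})$ I need the uniform bound
\[
x^2-\nu^2\gtrsim\nu^{4/3}\bigl(xg(\nu/x)\bigr)^{2/3}\qquad\text{for }\nu<x<(1+c)\nu .
\]
Writing $w=x/\nu$ and $\pi xg(\nu/x)=\nu F(w)$ with $F(w)=\sqrt{w^2-1}-\arccos(1/w)$, this reduces to $F(w)\lesssim(w-1)^{3/2}\asymp(w^2-1)^{3/2}$ on $[1,1+c]$; since $F(w)=\tfrac{2\sqrt2}{3}(w-1)^{3/2}+O\bigl((w-1)^{5/2}\bigr)$ near $w=1$, the quotient $F(w)/(w-1)^{3/2}$ extends continuously and positively to $w=1$, which gives the bound (and is the only place the smallness of $c$ enters). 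Because $\mathrm{Bi}(-z)$ and $\mathrm{Bi}'(-z)$ are bounded for $z=\bigl(\tfrac{3\pi}{2}xg(\nu/x)\bigr)^{2/3}$ ranging over the bounded set $[0,(\tfrac{3\pi C}{2})^{2/3}]$, it then follows that $\delta x^{-1}Y_\nu(x)$ equals the amplitude of the $\mathrm{Bi}'$-term in \eqref{ynasyNC2} times $O(\nu^{-2/3})$, which is absorbed into the error already present there; feeding this into the identity for $y_{\nu,\delta}'$ and multiplying by $x^{-\delta}$ gives \eqref{y2}.

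I expect the only genuinely non-routine point to be the turning-point comparison $x^2-\nu^2\gtrsim\nu^{4/3}(xg(\nu/x))^{2/3}$: everything else is bookkeeping with the amplitudes from Lemmas~\ref{lemma1} and~\ref{lemma2}. Even this is a short computation once the local behavior of $g$ (equivalently of $F$) at $x=\nu$ is written out explicitly.
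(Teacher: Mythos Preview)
Your proposal is correct and follows essentially the same approach as the paper: both derive the lemma from Lemmas~\ref{lemma1} and~\ref{lemma2} by checking that the correction $\delta x^{-1}Y_\nu(x)$ is absorbed into the existing error terms of $Y_\nu'(x)$, with the local expansion $F(w)=\tfrac{2\sqrt{2}}{3}(w-1)^{3/2}+O((w-1)^{5/2})$ (equivalently $xg(\nu/x)\asymp\nu^{-1/2}(x-\nu)^{3/2}$) supplying the needed amplitude comparison in the transition regime. The paper's proof is terser and singles out this asymptotic as the only nontrivial point for \eqref{y3}, whereas you invoke it for \eqref{y2} and handle \eqref{y3} with the simpler observation $\pi xg(\nu/x)\le\sqrt{x^2-\nu^2}$---a harmless difference in bookkeeping.
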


\begin{proof}
From Lemmas \ref{lemma1} and  \ref{lemma2}, we see that the term $\delta x^{-1}Y_{\nu}(x)$ can be absorbed into the error terms of the asymptotics of  $Y_{\nu}'(x)$. For the case of  \eqref{y3}, however, one needs to note that
\begin{equation*}\label{xg(x/nu)}
  xg(\nu/x)\asymp \nu^{-1/2}(x-v)^{3/2},
\end{equation*}
which follows  from the asymptotic expansion
\begin{equation*}
  \sqrt{z^2-1}-\arccos\frac{1}{z}=\frac{2\sqrt{2}}{3}(z-1)^{3/2}+O\left((z-1)^{5/2}\right)
\end{equation*}
as $z\rightarrow 1^+$.
\end{proof}

\begin{lemma}\label{largenu}
There exists a constant $c\in (0,1)$ such that for any $\delta\in\mathbb{R}$ and all sufficiently large $\nu$,
\begin{equation}\label{case1}
	b'_{\nu, \delta, k}g\left(\frac{\nu}{b'_{\nu, \delta, k}}\right)=\left\{
\begin{array}{ll}
k-\frac{1}{4}+O((\nu+k)^{-1}),  & \textrm{if  $b'_{\nu, \delta, k}\ge (1+c)\nu$,}\\
\\
k-\frac{1}{4}+O(k^{-1}),  & \textrm{if $\nu<b'_{\nu, \delta, k}< (1+c)\nu$.}
\end{array}\right.
\end{equation}
\end{lemma}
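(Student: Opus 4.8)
\emph{Proof strategy.}
Write $\psi(x):=xg(\nu/x)=\tfrac1\pi\bigl(\sqrt{x^2-\nu^2}-\nu\arccos(\nu/x)\bigr)$ for $x>\nu$. Differentiation gives $\psi(\nu^{+})=0$ and $\psi'(x)=\sqrt{x^2-\nu^2}/(\pi x)\in(0,\tfrac1\pi)$, so $\psi$ is an increasing bijection of $(\nu,\infty)$ onto $(0,\infty)$, with $\psi'(x)\ge c':=\tfrac1\pi\sqrt{1-(1+c)^{-2}}$ for $x\ge(1+c)\nu$. Hence $k\mapsto\psi(b'_{\nu,\delta,k})$ is increasing, and the lemma reduces to locating $\psi$ at each zero of $y'_{\nu,\delta}$ and identifying the ordinal $k$. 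A preliminary point: for all large $\nu$, $y'_{\nu,\delta}$ has no zero in $(0,\nu]$, so all its positive zeros lie in $(\nu,\infty)$. Indeed, writing $y'_{\nu,\delta}(x)=x^{-\delta}\bigl(Y'_\nu(x)-\delta x^{-1}Y_\nu(x)\bigr)$, Olver's below--turning--point asymptotics give $Y_\nu<0$, $Y'_\nu>0$ and $|Y'_\nu(x)/Y_\nu(x)|\gtrsim \sqrt{\nu^2-x^2}/x+\nu^{-1/3}$ on $(0,\nu]$, which exceeds $|\delta|/x$ once $\nu$ is large; so $Y'_\nu(x)-\delta x^{-1}Y_\nu(x)$ does not vanish there.

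I then count the zeros of $y'_{\nu,\delta}$ in $(\nu,\infty)$ according to the size of $\psi$. In the turning--point window $\psi(x)\le C$ (a fixed large constant) there are only $O(1)$ zeros; by \eqref{y2} and the simplicity of the negative zeros of $z\mapsto\mathrm{Bi}'(-z)$, at each such zero $\bigl(\tfrac{3\pi}{2}\psi(x)\bigr)^{2/3}$ lies within $O(\nu^{-2/3})$ of the $s$-th such zero $z_s$, and since $\mathrm{Bi}'(0)>0$ these occur for $s=1,2,\dots$ in order. In the oscillatory window $\psi(x)>C$ I use \eqref{y1} if $x\ge(1+c)\nu$ and \eqref{y3} if $\nu<x<(1+c)\nu$; the prefactor being nonzero, a zero forces $\sin\bigl(\pi\psi(x)+\tfrac\pi4\bigr)=O(E(x))$ with $E(x)=x^{-1}$, respectively $E(x)=\psi(x)^{-1}$, hence $\psi(x)=m-\tfrac14+O(E(x))$ for some integer $m$. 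As $\sin(\pi\psi+\tfrac\pi4)$ equals $(-1)^{m+1}/\sqrt2$ and $(-1)^{m}/\sqrt2$ at $\psi=m-\tfrac12$ and $\psi=m+\tfrac12$ --- of opposite sign and bounded away from $0$ --- for large $\psi$ the function $y'_{\nu,\delta}$ changes sign exactly once while $\psi$ runs through $(m-\tfrac12,m+\tfrac12)$; a standard interlacing argument, anchored by $m=1$ at the first zero and matched across the overlap by the Airy--zero expansion $\tfrac{2}{3\pi}z_s^{3/2}=s-\tfrac14+O(s^{-1})$, shows the $k$-th zero obeys $\psi(b'_{\nu,\delta,k})=k-\tfrac14+O\bigl(E(b'_{\nu,\delta,k})\bigr)$, with $m=k$ and no zeros skipped or spurious.

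Finally I convert $O(E)$ into the stated bounds. If $b'_{\nu,\delta,k}\ge(1+c)\nu$, then $\psi(b'_{\nu,\delta,k})\asymp\nu$ is large and $b'_{\nu,\delta,k}-\nu\ge\pi\psi(b'_{\nu,\delta,k})\gtrsim k$; using $\psi'\ge c'$ past $(1+c)\nu$ also $b'_{\nu,\delta,k}\le(1+c)\nu+\psi(b'_{\nu,\delta,k})/c'\lesssim\nu+k$, so $b'_{\nu,\delta,k}\asymp\nu+k$ and $E=(b'_{\nu,\delta,k})^{-1}=O\bigl((\nu+k)^{-1}\bigr)$. If $\nu<b'_{\nu,\delta,k}<(1+c)\nu$, then either $\psi(b'_{\nu,\delta,k})\le C$, so that $k=O(1)$ and the error is $O(1)=O(k^{-1})$, or $\psi(b'_{\nu,\delta,k})>C$ and $E=\psi(b'_{\nu,\delta,k})^{-1}=(k-\tfrac14+O(1))^{-1}=O(k^{-1})$. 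This is exactly \eqref{case1}.

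The main obstacle is the indexing carried out in the second paragraph: one must glue Olver's Airy--type description near the turning point $x=\nu$ to the trigonometric (McMahon--type) description for larger $x$ and verify, \emph{uniformly in $\nu$}, that consecutive sign changes of $y'_{\nu,\delta}$ advance $\psi$ by exactly one unit --- no zero missing, none spurious --- so that the ordinal $k$ of a zero equals the integer $m$ with $\psi=m-\tfrac14$. Everything else is bookkeeping with \eqref{y1}--\eqref{y3}.
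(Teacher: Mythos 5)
Your overall scheme --- use \eqref{y1}--\eqref{y3} to force sign changes of $y'_{\nu,\delta}$ where $h_\nu(x)=xg(\nu/x)$ passes near $m-\tfrac14$ (or near the zeros of $\mathrm{Bi}'(-\cdot)$ in the turning-point window), and then convert the error $O(E)$ into $O((\nu+k)^{-1})$ resp.\ $O(k^{-1})$ --- is the same as the paper's, and your final bookkeeping paragraph is essentially the paper's. The genuine gap is exactly the step you yourself flag as ``the main obstacle'': you assert that for large $\psi$ the function changes sign \emph{exactly once} while $\psi$ runs through $(m-\tfrac12,m+\tfrac12)$, that the zeros near the turning point pair off with the $\mathrm{Bi}'$ zeros ``in order'', and that a ``standard interlacing argument'' yields $m=k$ with no zeros skipped or spurious --- but no such argument is supplied, and it cannot be extracted from the estimates you use. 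The asymptotics \eqref{y1}--\eqref{y3} (and \eqref{y2}) control $y'_{\nu,\delta}$ only in sup norm; near a crossing of the leading sinusoid the error term is not small relative to the main term, so these formulas give an odd number of sign changes per oscillation (hence \emph{at least} one zero), never exactly one, and they cannot exclude clusters of extra zeros, uniformly in $\nu$. Since the whole point of the lemma is to attach the correct ordinal $k$ to each zero, this is not a technicality but the heart of the proof, and your proposal leaves it open.

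The paper closes this hole with a global counting input rather than an interlacing argument: the intermediate value theorem gives at least one zero in each of the $s$ disjoint intervals $(a_k,b_k)$ with $h_\nu((a_k,b_k))=(k-\tfrac12,k)$, and \cite[Lemmas 3.1 and 3.2]{FLPS:2024} give that $y'_{\nu,\delta}$ has \emph{exactly} $s$ zeros in $(0,(s+\nu/2+1/2)\pi)$ for large $s$; comparing the two counts forces exactly one zero in each $(a_k,b_k)$ and none elsewhere, which simultaneously identifies the $k$-th zero and shows there are no zeros in $(0,\nu]$ (so your separate Olver-type argument below the turning point, while plausible, becomes unnecessary). To repair your proof you would need either this counting result or a substitute giving genuine monotonicity control --- e.g.\ the modulus--phase decomposition of Filonov--Levitin--Polterovich--Sher, in which the phase is strictly increasing and zeros of $y'_{\nu,\delta}$ correspond bijectively to phase values in $\pi\mathbb{Z}$. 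A small further slip: in the case $b'_{\nu,\delta,k}\ge(1+c)\nu$ you write $\psi(b'_{\nu,\delta,k})\asymp\nu$; what is true (and what you actually use) is $\psi(b'_{\nu,\delta,k})\gtrsim\nu$ together with $\psi(b'_{\nu,\delta,k})\asymp k$, which still yields $b'_{\nu,\delta,k}\asymp\nu+k$ and the bound $O((\nu+k)^{-1})$.
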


\begin{proof}
We will study zeros of $y_{\nu,\delta}'(x)$ in the interval $(\nu,(s+\nu/2+1/2)\pi)$ for large integer $s>\nu^3 $. Let $h_{\nu}(x)=xg(\nu/x)$.
It can be  checked that the function
\begin{equation*}
h_\nu: [\nu, \infty)\rightarrow [0, \infty)
\end{equation*}
is continuous, strictly increasing and mapping $(\nu, (s+\nu/2+1/2)\pi)$ onto $(0, s+1/2+O(\nu^{2}/(s+\nu)))$. Therefore for each integer $1\leq k\leq s$ there exists an interval $(a_k, b_k)\subset (\nu, (s+\nu/2+1/2)\pi)$ such that $h_\nu$ maps $(a_k, b_k)$ to $(k-1/2, k)$  bijectively. All these intervals $(a_k, b_k)$'s are clearly disjoint.

If $\nu$ is sufficiently large then for each $1\leq k\leq s$
\begin{equation}
y_{\nu,\delta}'(a_k)y_{\nu,\delta}'(b_k)<0.\label{IVT-conditionNC}
\end{equation}
To prove this, we first fix a sufficiently large constant $\tilde C$ such that if $h_{\nu}(x)>\tilde C$, then the error term $O\left((h_{\nu}(x))^{-1}\right)$ in \eqref{y3} is less than
$10^{-10}$. Then we  consider sufficiently large $\nu$ such that the error term $O_{c}\left(x^{-1}\right)$ in \eqref{y1} is less than $10^{-10}$. If
\begin{equation}\label{h}
h_{\nu}((a_k, b_k))=(k-1/2, k)\subset (0,2\tilde C],
\end{equation}
we use \eqref{y2} to prove \eqref{IVT-conditionNC}, otherwise we use \eqref{y1} and \eqref{y3}. In the former case when \eqref{h} holds, there are at most $\lfloor 2\tilde C\rfloor$ choices of $k$.  The sign of $y_{\nu,\delta}'(x)$ depends on that of
\begin{equation}\label{Bi'}
\mathrm{Bi}'\left(-\left(3\pi h_{\nu}(x)/2 \right)^{2/3}\right)+O\left(\nu^{-2/3}\right).
\end{equation}
We denote by
$t_k$  the $k$-th zero of  $\mathrm{Bi}'(-x)$. We can derive that
\begin{equation*}
t_k=\left(\frac{3\pi}{2}\left(k-\frac{1}{4}+\beta'_k\right)\right)^{2/3}
\end{equation*}
with $|\beta'_k|<\left(e^{7/(9\pi)}-1\right)/2$ by using results in \cite[P. 214 and 405]{olver:1997}). Therefore
\begin{align}
t_k\in &\left(\left(\frac{3\pi}{2}\left(k-0.4\right)\right)^{2/3}, \left(\frac{3\pi}{2}\left(k-0.1\right)\right)^{2/3} \right) \nonumber \\
&\subsetneq \left(\left(\frac{3\pi}{2}h_\nu(a_k)\right)^{2/3}, \left(\frac{3\pi}{2}h_\nu(b_k)\right)^{2/3} \right). \label{theorem-2NC}
\end{align}
Since $\textrm{Bi}'(-x)$ oscillates around zero for positive $x$ and the intervals in \eqref{theorem-2NC} are pairwise disjoint, the signs of \eqref{Bi'} at $x=a_k$ and $b_k$ must be opposite whenever $\nu$ is sufficiently large, which ensures \eqref{IVT-conditionNC} in this case.
In the latter case when \eqref{h} fails, we must have $h_{\nu}((a_{\nu,k}, b_{\nu,k}))\subset (\tilde C,\infty)$. Thus \eqref{IVT-conditionNC} follows immediately from \eqref{y1} and \eqref{y3},  noting that
\begin{equation*}
  \sin\left(\pi h_{\nu}(a_k)+\frac{\pi}{4}\right)\sin\left(\pi h_{\nu}(b_k)+\frac{\pi}{4}\right)=-\frac{1}{2}<0.
\end{equation*}

Now we continue to prove the lemma. By the intermediate value
theorem, \eqref{IVT-conditionNC} implies that there exists at least one zero of $y_{\nu,\delta}'(x)$ in $(a_k, b_k)$ for each $1\le k\le s$.
On the other hand \cite[Lemmas 3.1 and 3.2]{FLPS:2024} show that if $s$ is sufficiently large, there are exactly $s$ zeros of $y_{\nu,\delta}'(x)$ in the interval $(0, (s+\nu/2+1/2)\pi)$. Hence $y_{\nu,\delta}'(x)$ has one and only one zero in each $(a_k, b_k)\subset (\nu, (s+\nu/2+1/2)\pi)$, which is $b_{\nu,\delta,k}'$ satisfying
\begin{equation}\label{hb}
  h_{\nu}(b_{\nu,\delta,k}')=b_{\nu,\delta,k}'g(\nu/b_{\nu,\delta,k}')\in \left(k-1/2,k\right).
\end{equation}
A direct observation is that when $\nu$ is sufficiently large, the lower bound for $b_{\nu,\delta,k}' (k\in \mathbb N)$ satisfies $b_{\nu,\delta,k}'>\nu$ and  $b_{\nu,\delta,k}'\gtrsim k$ since $0<g(x)\le 1/\pi$.

If $b_{\nu,\delta,k}'\ge (1+c)\nu$, then \eqref{case1} in this case follows from \eqref{y1}, \eqref{hb} and the lower bound of $b_{\nu,\delta,k}'$.
If  $\nu<b_{\nu,\delta,k}'< (1+c)\nu$ and $h_{\nu}(b_{\nu,\delta,k}')< 2\tilde C$, then $1\le k < 2\tilde C+1$. Hence \eqref{case1} in this case follows trivially from \eqref{hb}.
If $\nu<b_{\nu,\delta,k}'< (1+c)\nu$ and $h_{\nu}(b_{\nu,\delta,k}')\ge\tilde  C$, then combining \eqref{y3} and \eqref{hb} yields
\begin{equation*}
  h_{\nu}(b_{\nu,\delta,k}')=k-\frac{1}{4}+O\left((b_{\nu,\delta,k}')^{-1}\right).
\end{equation*}
This formula also implies $ h_{\nu}(b_{\nu,\delta,k}')\asymp k$, hence \eqref{case1} in this case.
This completes the proof.
\end{proof}

\begin{lemma}\label{smallnu}
For any $\delta\in\mathbb{R}$ and $V\in\mathbb{N}$ there exists a constant $K>0$ such that if $0\leq \nu\leq V$ and $k\geq K$ then
\begin{equation}\label{eqsmallnu}
		b'_{\nu, \delta, k}g\left(\frac{\nu}{b'_{\nu, \delta, k}}\right)=k-\frac{1}{4}+O\left(k^{-1}\right).
\end{equation}
\end{lemma}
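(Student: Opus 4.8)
The plan is to rerun the proof of Lemma~\ref{largenu} with $\nu$ confined to $[0,V]$, where everything simplifies: the Airy window $\nu<x<(1+c)\nu$ now lies in the fixed bounded interval $(0,(1+c)V)$, so it contains only finitely many zeros of $y_{\nu,\delta}'$, and for large $k$ the zero $b'_{\nu,\delta,k}$ falls in the region $x\ge\max\{(1+c)\nu,10\}$ where the clean trigonometric asymptotic \eqref{y1} is available, no Airy input \eqref{y2}--\eqref{y3} being needed. Keeping the notation $h_\nu(x)=xg(\nu/x)$, a continuous strictly increasing bijection of $[\nu,\infty)$ onto $[0,\infty)$, I would, exactly as in Lemma~\ref{largenu}, let $(a_k,b_k)$ be the interval on which $h_\nu$ runs over $(k-\tfrac12,k)$. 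Since $g\le1/\pi$ gives $h_\nu(x)\le x/\pi$, one has $a_k\ge\pi(k-\tfrac12)$, so there is $K=K(V)$, independent of $\nu\in[0,V]$, with $a_K\ge\max\{2V,10\}$ and with the (fixed, $\nu$-independent) implicit constant in \eqref{y1} for $c=1$ making the error there smaller than $\tfrac13$ for all $x\ge a_K$.

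For $k\ge K$ the half-line $[a_k,\infty)$ lies in the range of \eqref{y1}, and since
\[
\sin\!\Big(\pi(k-\tfrac12)+\tfrac{\pi}{4}\Big)\,\sin\!\Big(\pi k+\tfrac{\pi}{4}\Big)=-\tfrac12<0,
\]
\eqref{y1} forces $y_{\nu,\delta}'(a_k)y_{\nu,\delta}'(b_k)<0$ (the analogue of \eqref{IVT-conditionNC}) and forces $|\sin(\pi h_\nu+\tfrac{\pi}{4})|\ge\tfrac1{\sqrt2}$ on the complement of $\bigcup_{k\ge K}(a_k,b_k)$ in $[a_K,\infty)$, so $y_{\nu,\delta}'$ has no zeros there. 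Differentiating \eqref{y1} once — its error term, like the function itself, being $C^1$ with $O(x^{-1})$ derivative, as one sees by differentiating under the integral sign in the proof of Lemma~\ref{lemma1} — shows that on the sub-interval of $(a_k,b_k)$ where $|\sin(\pi h_\nu+\tfrac{\pi}{4})|<\tfrac12$ the function $y_{\nu,\delta}'$ is strictly monotone, so $(a_k,b_k)$ carries exactly one zero of $y_{\nu,\delta}'$. Combining this with \cite[Lemmas 3.1 and 3.2]{FLPS:2024} — which give, uniformly for $\nu\in[0,V]$, that $y_{\nu,\delta}'$ has exactly $s$ zeros in $(0,(s+\nu/2+\tfrac12)\pi)$ once $s$ is large — gives exactly $K-1$ zeros of $y_{\nu,\delta}'$ in $(0,a_K)$, whence the $k$-th positive zero satisfies $b'_{\nu,\delta,k}\in(a_k,b_k)$ for every $k\ge K$.

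It remains to refine. From $b'_{\nu,\delta,k}\in(a_k,b_k)$ we get $h_\nu(b'_{\nu,\delta,k})\in(k-\tfrac12,k)$ and, from $b'_{\nu,\delta,k}\ge a_k\ge\pi(k-\tfrac12)$, that $b'_{\nu,\delta,k}\asymp k$. Plugging $x=b'_{\nu,\delta,k}$ into \eqref{y1} and using that its prefactor is nonzero for $x>\nu$ gives $\sin(\pi h_\nu(b'_{\nu,\delta,k})+\tfrac{\pi}{4})=O((b'_{\nu,\delta,k})^{-1})=O(k^{-1})$; since the only zero of $\sin(\pi t+\tfrac{\pi}{4})$ in $(k-\tfrac12,k)$ is $t=k-\tfrac14$, this forces $h_\nu(b'_{\nu,\delta,k})=k-\tfrac14+O(k^{-1})$, which is \eqref{eqsmallnu}.

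The asymptotic itself is thus a one-line consequence of \eqref{y1}; the part I would expect to be most delicate is the bookkeeping needed to keep every constant uniform over the compact range $[0,V]$ — the implicit constant in \eqref{y1}, the threshold on $s$ from \cite{FLPS:2024}, and the number of zeros of $y_{\nu,\delta}'$ in the bounded interval $(0,\max\{2V,10\}]$, the last being finite and uniformly bounded because $Y_\nu$ and $Y_\nu'$ blow up at the origin so those zeros stay in a fixed compact subset of $(0,\infty)$ — together with the correct identification of the integer $n$ in $h_\nu(b'_{\nu,\delta,k})=n-\tfrac14+O(k^{-1})$ as $n=k$, for which the exact count of \cite{FLPS:2024} is used just as in Lemma~\ref{largenu}.
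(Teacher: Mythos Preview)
Your argument is essentially correct and reaches the same endpoint as the paper --- plugging $x=b'_{\nu,\delta,k}$ into \eqref{y1} to read off $\sin(\pi h_\nu(b'_{\nu,\delta,k})+\tfrac{\pi}{4})=O(k^{-1})$ once the zero has been localized and indexed --- but you take a noticeably longer route to the localization.

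The paper works directly with the ``natural'' intervals $[(k+\nu/2-\tfrac12)\pi,(k+\nu/2+\tfrac12)\pi)$: it finds a sub-interval $(a_k,b_k)$ with $h_\nu$-image $(k-\tfrac38,k+\tfrac18)$ on which the sign change is visible, and then invokes \cite[Lemmas~3.1 and~3.2]{FLPS:2024} to assert that each natural interval contains exactly one zero, already identified as $b'_{\nu,\delta,k}$. Uniqueness and indexing are thus both delegated to FLPS in one stroke.

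You instead recycle the $(a_k,b_k)$ from Lemma~\ref{largenu} (with $h_\nu$-image $(k-\tfrac12,k)$) and establish uniqueness yourself via two extra ingredients: a gap-exclusion argument ($|\sin(\pi h_\nu+\tfrac{\pi}{4})|\ge 1/\sqrt2$ on the complement), which is fine, and a monotonicity argument obtained by differentiating \eqref{y1}. The latter needs the $O(x^{-1})$ error in \eqref{y1} to be $C^1$ with $O(x^{-1})$ derivative --- equivalently, an asymptotic for $y''_{\nu,\delta}$ of the same quality as \eqref{y1}. Your hint (differentiate once more under the integral sign in Lemma~\ref{lemma1} and redo the stationary-phase bound) is plausible and can be made rigorous, but it is additional work not present in the paper. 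The paper sidesteps this entirely by letting FLPS carry the uniqueness; that is the more economical choice. What your approach buys is a self-contained uniqueness-per-interval argument that uses FLPS only as a global count, which would be useful if one wanted to weaken the dependence on that reference.
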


\begin{proof}
For any $0\leq \nu\leq V$, we  apply \eqref{y1} with $c=1$ to study $y_{\nu,\delta}'(x)$. We first fix a sufficiently large constant $C_V$ such that  if  $x>C_V$, then the error term $O_c(x^{-1})$  in \eqref{y1} is less than $10^{-10}$. Note that if $k$ is sufficiently large,  the interval
\begin{equation}\label{interval}
  [(k+\nu/2-1/2)\pi,(k+\nu/2+1/2)\pi)
\end{equation}
is contained in $(C_{V},\infty)$. Since
\begin{equation*}
  h_{\nu}((k+\nu/2\pm1/2)\pi)=k\pm\frac{1}{2}+O(\nu^2/k),
\end{equation*}
there exists $(a_k,b_k)\subset [(k+\nu/2-1/2)\pi,(k+\nu/2+1/2)\pi)$ such that
\begin{equation*}
  h_{\nu}((a_k,b_k))=(k-3/8,k+1/8).
\end{equation*}
Then from \eqref{y1}, it is easy to see that
\begin{equation*}
y_{\nu,\delta}'(a_k)y_{\nu,\delta}'(b_k)<0.
\end{equation*}
By the intermediate value theorem there exists at least one zero of $y_{\nu,\delta}'(x)$ in $(a_k, b_k)$.

On the other hand,  it follows from \cite[Lemmas 3.1 and 3.2]{FLPS:2024} that if $k$ is sufficiently large, there exists exactly one zero, which is $b_{\nu,\delta,k}'$, in the interval \eqref{interval}.
Then from \eqref{y1}, we have
\begin{equation*}
  h_{\nu}(b_{\nu,\delta,k}')=k-\frac{1}{4}+O\left((b_{\nu,\delta,k}')^{-1}\right).
\end{equation*}
Notice that $h_{\nu}(b_{\nu,\delta,k}')\in (k-3/8,k+1/8)$ implies that $b_{\nu,\delta,k}'\gtrsim k$, hence \eqref{eqsmallnu}. This completes the proof.
\end{proof}


\begin{theorem}
For any fixed $\nu\geq 0$ and  $\delta\in\mathbb{R}$, if  $k$ is sufficiently large, then
\begin{equation}\label{a1}
a'_{\nu, \delta, k}=\left(k+\frac{\nu}{2}-\frac{3}{4} \right)\pi +O_{\nu}\left(k^{-1}\right),
\end{equation}
and
\begin{equation}\label{b1}
b'_{\nu, \delta, k}= \left(k+\frac{\nu}{2}-\frac{1}{4} \right)\pi +O_{\nu}\left(k^{-1} \right).
\end{equation}
\end{theorem}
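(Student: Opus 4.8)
The goal is to upgrade Lemmas~\ref{largenu} and~\ref{smallnu}, which control $b_{\nu,\delta,k}'$ through the relation $h_\nu(b_{\nu,\delta,k}') = k - \tfrac14 + O(k^{-1})$, into the explicit one-term location \eqref{b1}, and to prove the analogous statement \eqref{a1} for $a_{\nu,\delta,k}'$. First I would treat $b_{\nu,\delta,k}'$. Combining Lemma~\ref{largenu} (for large $\nu$) and Lemma~\ref{smallnu} (for $\nu$ in a bounded range), one sees that for every fixed $\nu \ge 0$ and $k$ large we have $h_\nu(b_{\nu,\delta,k}') = b_{\nu,\delta,k}' g(\nu/b_{\nu,\delta,k}') = k - \tfrac14 + O_\nu(k^{-1})$; here $g(x) = (\sqrt{1-x^2} - x\arccos x)/\pi$. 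The plan is then to invert $h_\nu$. Since $g(0) = 1/\pi$ and, by Taylor expansion of $g$ near $0$, $xg(\nu/x) = x/\pi - \nu/2 + O_\nu(x^{-1})$ as $x \to \infty$ (using $\arccos t = \pi/2 - t + O(t^3)$ and $\sqrt{1-t^2} = 1 - t^2/2 + O(t^4)$, so $g(t) = 1/\pi - t/2 + O(t^3)$), the equation $h_\nu(b_{\nu,\delta,k}') = k - \tfrac14 + O_\nu(k^{-1})$ becomes
\begin{equation*}
\frac{b_{\nu,\delta,k}'}{\pi} - \frac{\nu}{2} + O_\nu\!\left((b_{\nu,\delta,k}')^{-1}\right) = k - \frac14 + O_\nu(k^{-1}).
\end{equation*}
Since the lemmas already give $b_{\nu,\delta,k}' \asymp k$, the error terms are $O_\nu(k^{-1})$, and solving for $b_{\nu,\delta,k}'$ yields $b_{\nu,\delta,k}' = (k + \nu/2 - 1/4)\pi + O_\nu(k^{-1})$, which is \eqref{b1}.

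\textbf{The zeros of $j_{\nu,\delta}'$.} For $a_{\nu,\delta,k}'$ I would run the same machinery with $J_\nu$ in place of $Y_\nu$. The point is that the asymptotics of Lemmas~\ref{lemma1}--\ref{lemma2} and the subsequent lemma on $y_{\nu,\delta}'$ have exact analogues for $j_{\nu,\delta}'$, obtained by replacing $\sin$ with $\cos$ (equivalently, shifting the phase by $\pi/2$): concretely $J_\nu'(x) = \sqrt{2}(x^2-\nu^2)^{1/4}\pi^{-1/2}x^{-1}(\cos(\pi x g(\nu/x) + \pi/4) + O_c(x^{-1}))$ in the region $x \ge \max\{(1+c)\nu, 10\}$, with the corresponding Airy-type statement in the transition region, and $\delta x^{-1}J_\nu(x)$ absorbed into the error exactly as in the lemma preceding Lemma~\ref{largenu}. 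These are just as standard (cf.\ \cite[Lemmas A.1 and A.2]{GJWY:2024}); I would state them briefly or cite them. Repeating the intermediate-value-theorem argument of Lemmas~\ref{largenu} and~\ref{smallnu} — now using the zeros of $\mathrm{Ai}'(-x)$ (or $\mathrm{Bi}$ in the appropriate combination) in the transition region, whose $k$-th zero is $(\tfrac{3\pi}{2}(k - \tfrac34 + \beta_k))^{2/3}$ with $\beta_k$ uniformly bounded — together with \cite[Lemmas 3.1 and 3.2]{FLPS:2024} for the exact count of zeros in $(0, (s+\nu/2+1/2)\pi)$, gives $h_\nu(a_{\nu,\delta,k}') = k + \tfrac14 + O_\nu(k^{-1})$ (the shift from $k - \tfrac14$ to $k + \tfrac14$ coming from the extra $\pi/2$ in the phase, i.e.\ from $\cos$ versus $\sin$). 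Inverting $h_\nu$ exactly as above then produces $a_{\nu,\delta,k}' = (k + \nu/2 - 3/4)\pi + O_\nu(k^{-1})$, which is \eqref{a1}.

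\textbf{The special zero at the origin.} One bookkeeping point must be checked: when $\nu \le |\delta|$ the value $x = 0$ is counted as the first zero of $j_{\nu,\delta}'$, so the indexing of $a_{\nu,\delta,k}'$ is consistent with the count of positive zeros used from \cite{FLPS:2024}; since we only claim \eqref{a1} for $k$ sufficiently large this shift by at most one in the index is harmless and merely affects which $\beta'$ in the statement is matched. I would remark on this explicitly. The main obstacle is not any single estimate — each ingredient (the $\cos$-version asymptotics for $J_\nu'$, the Airy zero bounds, the exact zero count) is routine or already in the literature — but rather assembling them uniformly: one must make sure that for \emph{every} fixed $\nu \ge 0$ the transition-region analysis (relevant only for large $\nu$) and the large-$x$ analysis (Lemma~\ref{smallnu}, for bounded $\nu$) together cover all cases with error constants depending only on $\nu$, so that the two regimes of Lemma~\ref{largenu} patch cleanly with Lemma~\ref{smallnu}. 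Once $h_\nu(a_{\nu,\delta,k}')$ and $h_\nu(b_{\nu,\delta,k}')$ are pinned down to $k \pm \tfrac14 + O_\nu(k^{-1})$, the inversion of $h_\nu$ is elementary and identical in the two cases.
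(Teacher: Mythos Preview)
Your approach is essentially the paper's: for $b'_{\nu,\delta,k}$ you combine Lemmas~\ref{largenu} and~\ref{smallnu} and invert $h_\nu$ via its Taylor expansion at infinity, exactly as the paper does; for $a'_{\nu,\delta,k}$ you rerun the machinery with the $J_\nu$-analogues, which the paper supplies in its Appendix and, per the Remark following the theorem, treats in parallel.

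One slip to fix: you assert $h_\nu(a'_{\nu,\delta,k}) = k + \tfrac14 + O_\nu(k^{-1})$, but this is inconsistent with your own conclusion. The zeros of $\cos(\pi h + \tfrac{\pi}{4})$ (equivalently $\sin(\pi h + \tfrac{3\pi}{4})$, which is the phase in the paper's Lemma~\ref{lemmaa1}) lie at $h = m + \tfrac14$ for $m = 0,1,2,\ldots$, so with the natural indexing one gets $h_\nu(a'_{\nu,\delta,k}) = k - \tfrac34 + O_\nu(k^{-1})$; inverting \emph{that} yields $(k + \nu/2 - 3/4)\pi$ as desired. Had $h_\nu(a') = k + \tfrac14$ actually held, inversion via $h_\nu(x) = x/\pi - \nu/2 + O_\nu(x^{-1})$ would produce $(k + \nu/2 + 1/4)\pi$, not $(k + \nu/2 - 3/4)\pi$. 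The correct value $k - \tfrac34$ also matches your own observation that the $k$-th zero of $\mathrm{Ai}'(-x)$ sits near $(\tfrac{3\pi}{2}(k - \tfrac34))^{2/3}$. This is a bookkeeping error rather than a gap in strategy.
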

\begin{remark}
See \cite[Theorem 4.11]{GJWY:2024} for \eqref{a1} when $\nu\geq |\delta|$. Due to space constraints, a direct proof was omitted there. For completeness, we provide   asymptotics of $J_{\nu}$ and $J_{\nu}'$  in the appendix. The asymptotics  \eqref{a1} for $a_{\nu,\delta,k}'$ can be derived similarly to that for $b_{\nu,\delta,k}'$.
\end{remark}
\begin{proof}
From Lemmas \ref{largenu} and \ref{smallnu}, we have
\begin{equation}\label{b11}
  b'_{\nu, \delta, k}g\left(\frac{\nu}{b'_{\nu, \delta, k}}\right)=k-\frac{1}{4}+O(k^{-1})
\end{equation}
for any fixed $\nu\ge 0$ and sufficiently large $k$. Meanwhile, by the Taylor expansion of the function $g$ at $0$, we have
\begin{equation}\label{b12}
  b'_{\nu, \delta, k}g\left(\frac{\nu}{b'_{\nu, \delta, k}}\right)=\frac{1}{\pi}\left(b'_{\nu, \delta, k}-\frac{\nu}{2}\pi+O_{\nu}((b'_{\nu, \delta, k})^{-1})\right).
\end{equation}
Combining \eqref{b11} and \eqref{b12} then yields \eqref{b1}. This completes the proof.
\end{proof}

\section{proof of theorem \ref{mainresult}}
Once the one-term asymptotics \eqref{a1} and \eqref{b1}
are derived, combining them with Hankel's expansions for Bessel functions and the Lagrange inversion theorem yields our main result: the McMahon-type expansions \eqref{expansions} for  $a_{\nu,\delta,k}'$ and $b_{\nu,\delta,k}'$.
\begin{proof}[Proof of Theorem \ref{mainresult}]
We only prove \eqref{expansions} for $b'_{\nu, \delta, k}$, as the proof for $a'_{\nu, \delta, k}$ is similar. For any fixed $\nu\geq 0$ and sufficiently large $x$,  applying Hankel's asymptotic expansions for $Y_{\nu}(x)$ and $Y_{\nu}'(x)$ (see \cite[P. 364-365]{abram:1972}) yields
\begin{equation}\label{zeros1}
\begin{split}
\sqrt{\frac{\pi x}{2}}x^{\delta}y_{\nu,\delta}'(x)\!=\!\left(\!R(\nu,x)\!-\!\frac{\delta} {x}Q(\nu,x)\!\right)\cos\chi
\!-\!\left(\!S(\nu,x)\!+\!\frac{\delta}{x}P(\nu,x)\!\right)\sin\chi,
\end{split}
\end{equation}
where $\chi=x-(\nu/2+1/4)\pi$, and
\begin{equation*}
P(\nu,x)\thicksim1-\frac{A_2(\nu)}{x^2}
+\frac{A_4(\nu)}{x^4} -\frac{A_6(\nu)}{x^6}+\cdots,
\end{equation*}
\begin{equation*}
Q(\nu,x)\thicksim\frac{A_1(\nu)}{x}-\frac{A_3(\nu)}{x^3}+\frac{A_5(\nu)}{x^5}-\frac{A_7(\nu)}{x^7}+\cdots,
\end{equation*}
\begin{equation*}
R(\nu,x)\thicksim1-\frac{\mu+15}{\mu-3^2}\frac{A_2(\nu)}{x^2}
+\frac{\mu+63}{\mu-7^2}\frac{A_4(\nu)}{x^4} -\frac{\mu+143}{\mu-11^2}\frac{A_6(\nu)}{x^6}+\cdots,
\end{equation*}
\begin{equation*}
S(\nu,x)\!\thicksim\!\frac{\mu\!+\!3}{\mu\!-\!1}\frac{A_1(\nu)}{x}-\frac{\mu\!+\!35}{\mu\!-\!5^2}\frac{A_3(\nu)}{x^3}
+\frac{\mu\!+\!99}{\mu\!-\!9^2}\frac{A_5(\nu)}{x^5}-\frac{\mu\!+\!195}{\mu\!-\!13^2}\frac{A_7(\nu)}{x^7}+\cdots,
\end{equation*}
with  $\mu=4\nu^2$ and
\begin{equation}\label{def-A}
A_{s}(\nu)=\frac{(\mu-1)(\mu-3^2)\cdots(\mu-(2s-1)^2)}{s! 8^s}.
\end{equation}

Taking $x=b_{\nu,\delta,k}'$ in \eqref{zeros1} for sufficiently large $k$, dividing the resulting equation by $\sin\left(b_{\nu,\delta,k}'-\left({\nu}/{2}+{1}/{4}\right)\pi\right)$, and then applying the arccotangent function yields
\begin{equation}\label{2}
b_{\nu,\delta,k}'=\left(k+{\nu}/{2}-{3}/{4}\right)\pi+\mathrm{arccot}\,T(\nu,b_{\nu,\delta,k}') ,
\end{equation}
where
\begin{equation*}
  T(\nu,b_{\nu,\delta,k}')=\frac{S(\nu,b_{\nu,\delta,k}')+\frac{\delta}{b_{\nu,\delta,k}'}P(\nu,b_{\nu,\delta,k}')}{R(\nu,b_{\nu,\delta,k}'),
 -\frac{\delta}{b_{\nu,\delta,k}'}Q(\nu,b_{\nu,\delta,k}')}
\end{equation*}
and here we have used the observation from   \eqref{b1}  that
\begin{align*}
b_{\nu,\delta,k}'-\left({\nu}/{2}+{1}/{4}\right)\pi=(k-1/2)\pi+O_{\nu}\left(k^{-1}\right).
\end{align*}
Applying  the Taylor expansion of the function $\mathrm{arccot} $ at $0$ to \eqref{2}, we obtain
\begin{equation}\label{3}
\begin{split}
b_{\nu,\delta,k}'\thicksim &\left(k+\frac{\nu}{2}-\frac{1}{4}\right)\pi-  T(\nu,b_{\nu,\delta,k}')
 +\frac{ \left(T(\nu,b_{\nu,\delta,k}')\right)^3}{3}-\frac{ \left(T(\nu,b_{\nu,\delta,k}')\right)^5}{5}\\
 &\quad +\frac{\left(T(\nu,b_{\nu,\delta,k}')\right)^7}{7} +\cdots.
 \end{split}
\end{equation}
A straightforward calculation  shows that
\begin{equation*}
 T(\nu,b_{\nu,\delta,k}')\thicksim \frac{B_1(\nu)}{b_{\nu,\delta,k}'}+\frac{B_3(\nu)}{b_{\nu,\delta,k}'^3}+\frac{B_5(\nu)}{b_{\nu,\delta,k}'^5}+\frac{B_7(\nu)}{b_{\nu,\delta,k}'^7}+\cdots,
\end{equation*}
where
\begin{equation*}
  B_1(\nu,\delta)=\frac{\mu+3}{\mu-1}A_1(\nu)+\delta,
\end{equation*}
\begin{equation*}
\begin{split}
& B_3(\nu,\delta)=\\
&\quad  -\frac{\mu\!+\!35}{\mu\!-\!5^2}A_3(\nu)-\delta A_2(\nu)\!+\!\left(\!\frac{\mu\!+\!3}{\mu\!-\!1}A_1(\nu)+\delta\!\right)
  \!\!\left(\!\frac{\mu\!+\!15}{\mu\!-\!3^2}A_2(\nu) +\!\delta A_1(\nu)\!\right)\!,
  \end{split}
\end{equation*}
\begin{equation*}
\begin{split}
&  B_5(\nu,\delta)=\\
&\quad \frac{\mu\!+\!99}{\mu\!-\!9^2}A_5(\nu)\!+\!\delta A_4(\nu)\!-\!\!\left(\!\frac{\mu\!+\!3}{\mu\!-\!1}A_1(\nu)\!+\!\delta\!\right)\!
\left(\!\frac{\mu\!+\!63}{\mu\!-\!7^2}A_4(\nu)\!+\!\delta A_3(\nu)\!\!\right) +\\
&\quad\left(\!\frac{\mu\!+\!15}{\mu\!-\!3^2}A_2(\nu)+\delta A_1(\nu)\!\right)\!
\Bigg(\!\!-\frac{\mu\!+\!35}{\mu\!-\!5^2}A_3(\nu)\!-\!\delta A_2(\nu)
\!+\!\left(\!\frac{\mu\!+\!3}{\mu\!-\!1}A_1(\nu)\!+\!\delta\!\right)\\
&\quad\left(\!\frac{\mu\!+\!15}{\mu\!-\!3^2}A_2(\nu) \!+\!\delta\! A_1(\nu)\!\right)\!\!\Bigg),
\end{split}
\end{equation*}
and
\begin{equation*}
\begin{split}
 & B_7(\nu,\delta)=\\
 &\quad-\frac{\mu\!+\!195}{\mu\!-\!13^2}A_7(\nu)\!-\! \delta A_6(\nu) \!+\!\left(\!\frac{\mu \!+\!3}{\mu\!-\!1}A_1(\nu)\!+\!\delta\!\right)\!
  \left(\!\frac{\mu\!+\!143}{\mu\!-\!11^2}A_6(\nu)\!+\!\delta A_5(\nu)\!\right)\\
 &\quad -\!\left(\!\frac{\mu\!+\!63}{\mu\!-\!7^2}A_4(\nu)\!+\!\delta A_3(\nu)\!\!\right)\!
\Bigg(\!\!-\!\frac{\mu\!+\!35}{\mu\!-\!5^2}A_3(\nu)\!-\!\delta A_2(\nu)
\!+\!\!\left(\!\frac{\mu\!+\!3}{\mu\!-\!1}A_1(\nu)\!+\!\delta\!\right)\\
& \quad\left(\!\frac{\mu\!+\!15}{\mu\!-\!3^2}\!A_2(\nu) \!+\!\delta\! A_1\!(\nu)\!\!\right)\!\!\!\Bigg)\!
\!+\!\!\left(\!\frac{\mu\!+\!15}{\mu\!-\!3^2}A_2\!(\nu)\!+\!\delta\! A_1\!(\nu)\!\!\right)\!\!\Bigg(\!\frac{\mu\!+\!99}{\mu\!-\!9^2}A_5(\nu)\!+\!\delta\! A_4(\nu)\\
&\quad-\!\!\left(\!\frac{\mu\!+\!3}{\mu\!-\!1}A_1(\nu)\!+\!\delta\!\right)\left(\!\frac{\mu\!+\!63}{\mu\!-\!7^2}A_4(\nu)\!+\!\delta A_3(\nu)\!\right)\!
+\!\left(\!\frac{\mu\!+\!15}{\mu\!-\!3^2}A_2(\nu)\!+\!\delta A_1(\nu)\!\right)
\\
&\quad \Bigg(\!\!\!-\!\frac{\mu\!+\!35}{\mu\!-\!5^2}A_3(\nu)\!-\!\delta A_2(\nu)
\!+\!\!\left(\!\frac{\mu\!+\!3}{\mu\!-\!1}A_1(\nu)\!+\!\delta\!\right)
\!\!\left(\!\frac{\mu\!+\!15}{\mu\!-\!3^2}A_2(\nu) \!+\!\delta\! A_1(\nu)\!\right)\!\!\Bigg)\!\!\Bigg).
  \end{split}
\end{equation*}
Thus, \eqref{3} becomes
\begin{equation}\label{4}
\begin{split}
&b_{\nu,\delta,k}'\thicksim\\
&\quad \beta'+\frac{-B_1(\nu,\delta)}{b_{\nu,\delta,k}'} + \frac{-B_3(\nu,\delta)\!+\!\frac{1}{3}(B_1(\nu,\delta))^3}{b_{\nu,\delta,k}'^3}+\frac{U(\nu,\delta)}{b_{\nu,\delta,k}'^5}+\frac{\tilde U(\nu,\delta)}{b_{\nu,\delta,k}'^7}+\cdots\!,
\end{split}
\end{equation}
where $\beta'=\left(k+\frac{\nu}{2}-\frac{1}{4}\right)\pi$,
\begin{equation*}
U(\nu,\delta)= -B_5(\nu,\delta)+(B_1(\nu,\delta))^2B_3(\nu,\delta)-\frac{1}{5}(B_1(\nu,\delta))^5
\end{equation*}
and
\begin{equation*}
\begin{split}
\tilde U(\nu,\delta)=&-B_7(\nu)+(B_1(\nu,\delta))^2B_5(\nu,\delta)+B_1(\nu,\delta)(B_3(\nu,\delta))^2\\
&-(B_1(\nu,\delta))^4B_3(\nu,\delta)+\frac{1}{7}(B_1(\nu,\delta))^7.
\end{split}
\end{equation*}
Applying  the Lagrange inversion theorem to \eqref{4} and using \eqref{def-A} (the definition of $A_s(\nu)$) yields the desired expansion for $b_{\nu,\delta,k}'$ in \eqref{expansions}. This completes the proof of Theorem \ref{mainresult}.
\end{proof}

\appendix

\section{related asymptotics of the Bessel functions $J_{\nu}(x)$ and $J_{\nu}'(x)$}
The following two lemmas concern the asymptotics of Bessel functions $J_{\nu}(x)$ and $J_{\nu}'(x)$ under different assumptions on $x$ and $\nu$.
The proofs of these asymptotics  are similar to those for $Y_{\nu}(x)$ and $Y_{\nu}'(x)$ in Lemmas \ref{lemma1} and  \ref{lemma2}. See also  \cite[ Appendix A]{GMWW:2021}  and \cite[Lemmas 2.1 and 2.2]{Guo} for the case of $J_{\nu}(x)$.

\begin{lemma}\label{lemmaa1}
For any $c>0$ and $\nu\ge 0$, if $x\ge \max\{(1+c)\nu,10\}$ then
 \begin{equation*}
J_{\nu}(x)=\frac{2^{1/2}}{\pi^{1/2}\left(x^2-\nu^2\right)^{1/4} }\left(\sin\left( \pi
xg\left(\frac{\nu}{x}\right)+\frac{\pi}{4}\right)+O_{c}\left(x^{-1}\right)\right),
\end{equation*}
\begin{equation*}
J_{\nu}'(x)=\frac{2^{1/2}\left(x^2-\nu^2\right)^{1/4}}{\pi^{1/2}x}\left(\sin\left( \pi
xg\left(\frac{\nu}{x}\right)+\frac{3\pi}{4}\right)+O_{c}\left(x^{-1}\right)\right).
\end{equation*}
\end{lemma}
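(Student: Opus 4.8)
The plan is to imitate, almost verbatim, the proof of Lemma~\ref{lemma1}, the only bookkeeping changes being a swap of $\cos$ for $\sin$ in the integrand and the insertion of a $\sin\theta$ weight. The starting point will be the classical integral representation
\[
  J_\nu(x)=\frac{1}{\pi}\int_0^\pi\cos\!\left(x\sin\theta-\nu\theta\right)\mathrm{d}\theta-\frac{\sin(\nu\pi)}{\pi}\int_0^\infty e^{-\nu t-x\sinh t}\,\mathrm{d}t,
\]
valid for $x>0$. For $x\ge\max\{(1+c)\nu,10\}$ one checks that differentiation under the integral signs is legitimate, giving
\[
  J_\nu'(x)=-\frac{1}{\pi}\int_0^\pi\sin\theta\,\sin\!\left(x\sin\theta-\nu\theta\right)\mathrm{d}\theta+\frac{\sin(\nu\pi)}{\pi}\int_0^\infty\sinh t\,e^{-\nu t-x\sinh t}\,\mathrm{d}t.
\]
With $\phi(\theta)=\sin\theta-\tfrac{\nu}{x}\theta$ as in Lemma~\ref{lemma1}, the oscillatory pieces are $\mathrm{Re}\bigl(\tfrac{1}{\pi}\int_0^\pi e^{ix\phi(\theta)}\,\mathrm{d}\theta\bigr)$ for $J_\nu$ and $\mathrm{Re}\bigl(\tfrac{i}{\pi}\int_0^\pi e^{ix\phi(\theta)}\sin\theta\,\mathrm{d}\theta\bigr)$ for $J_\nu'$, while the non-oscillatory tails are $O_c(x^{-1})$ and $O_c(x^{-2})$ respectively, obtained by integrating by parts once (resp.\ twice) in $t$ and using $|\sin(\nu\pi)|\le1$.

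Next I would apply the method of stationary phase to these two oscillatory integrals exactly as in the proof of Lemma~\ref{lemma1}. The phase $\phi$ has the unique critical point $\theta_0=\arccos(\nu/x)$, and the hypothesis $x\ge(1+c)\nu$ confines $\theta_0$ to a compact subset of $(0,\pi)$ bounded away from both endpoints, with $\phi''(\theta_0)=-\sqrt{1-(\nu/x)^2}$; this is precisely what renders the stationary-phase estimate uniform with an implied constant depending only on $c$. Since $x\phi(\theta_0)=x\bigl(\sqrt{1-(\nu/x)^2}-\tfrac{\nu}{x}\arccos(\nu/x)\bigr)=\pi xg(\nu/x)$ by the definition of $g$, the stationary-point contribution is
\[
  \frac{2^{1/2}}{\pi^{1/2}(x^2-\nu^2)^{1/4}}\,e^{i(\pi xg(\nu/x)-\pi/4)}\bigl(1+O_c(x^{-1})\bigr)
\]
for the weight $1$, and $\tfrac{2^{1/2}(x^2-\nu^2)^{1/4}}{\pi^{1/2}x}e^{i(\pi xg(\nu/x)-\pi/4)}(1+O_c(x^{-1}))$ for the weight $\sin\theta$ — the very expressions already computed in Lemma~\ref{lemma1}. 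The portion of each integral away from $\theta_0$ is $O_c(x^{-1})$ (weight $1$) or $O_c(x^{-2})$ (weight $\sin\theta$, which vanishes at the endpoints), via integration by parts in $\theta$. Finally, taking real parts and using $\mathrm{Re}(e^{i\psi})=\sin(\psi+\tfrac{\pi}{2})$ and $\mathrm{Re}(ie^{i\psi})=\sin(\psi+\pi)$ converts the phase $\pi xg(\nu/x)-\tfrac{\pi}{4}$ into $\pi xg(\nu/x)+\tfrac{\pi}{4}$ and $\pi xg(\nu/x)+\tfrac{3\pi}{4}$ respectively, which is exactly the statement of Lemma~\ref{lemmaa1}.

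I do not expect a genuine obstacle: every ingredient is identical, up to the trivial modifications just listed, to what has already been carried out for $Y_\nu$ and $Y_\nu'$ in Lemmas~\ref{lemma1} and~\ref{lemma2}. The two points that do call for a little care are (i) the sign and phase-shift bookkeeping in the last step, so that the constants $+\tfrac{\pi}{4}$ and $+\tfrac{3\pi}{4}$ emerge correctly, and (ii) verifying that all error terms are uniform in $\nu$ over the range $x\ge(1+c)\nu$, which follows — exactly as in Lemma~\ref{lemma1} — from $\theta_0$ staying bounded away from the endpoints. As an alternative to the stationary-phase route one could, as in Lemma~\ref{lemma2}, simply invoke Olver's uniform asymptotic expansions of $J_\nu(\nu z)$ and $J_\nu'(\nu z)$ together with the large-argument asymptotics of the Airy functions, which yields the same conclusion.
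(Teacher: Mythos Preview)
Your proposal is correct and follows exactly the approach the paper indicates: the appendix does not give a separate proof of Lemma~\ref{lemmaa1} but simply states that the argument is ``similar to those for $Y_\nu(x)$ and $Y_\nu'(x)$ in Lemmas~\ref{lemma1} and~\ref{lemma2}'', and your sketch carries out precisely that transfer with the right integral representation and the correct phase shifts $+\tfrac{\pi}{4}$ and $+\tfrac{3\pi}{4}$. The only point worth double-checking is that for $J_\nu$ itself (weight $1$) the $O_c(x^{-1})$ endpoint contribution from the oscillatory integral and the $O_c(x^{-1})$ Laplace tail are not individually small enough, but their leading parts cancel (both equal $\pm\tfrac{\sin(\nu\pi)}{\pi(x+\nu)}$) to leave an $O_c(x^{-2})$ remainder, which then fits inside the stated $O_c(x^{-1})$ error after division by the prefactor --- a routine verification, but one not quite captured by your assertion that each piece is separately $O_c(x^{-1})$.
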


\begin{lemma}\label{lemmaa2}
 For any $c>0$ and sufficiently large $\nu$, if $\nu<x<(1+c)\nu$ then
 \begin{equation*}
J_{\nu}(x)=\frac{(12\pi xg(\nu/x) )^{1/6}}{(x^2-\nu^2)^{1/4}}\left(\mathrm{Ai}\!\left(\!-\!\left(\frac{3\pi}{2} xg\!\left(\frac{\nu}{x}\right)\!\right)^{\!2/3}\right)+O\left(\nu^{-4/3}\right)\right),
\end{equation*}
 \begin{equation*}
J_{\nu}'(x)=\frac{-2\left(x^2-\nu^2\right)^{1/4}}{(12\pi xg\left(\nu/x \right))^{1/6}x}
	\!\left(\!\mathrm{Ai}'\!\left(\!-\!\left(\!\frac{3\pi}{2} x g\!\left(\frac{\nu}{x} \right)\!\right)^{\!2/3}\right)\!+O\left(\nu^{-2/3}\right)\!\right)
\end{equation*}
  when $xg(\nu/x)\leq C$ for some large constant $C>0$, and
\begin{equation*}
J_{\nu}(x)=\frac{2^{1/2}}{\pi^{1/2} \left(x^2-\nu^2\right)^{1/4}} \left(\sin\left(\pi xg\!\left(\frac{\nu}{x}\right)+\frac{\pi}{4}\right)
+O\left(\!\left(xg\left(\frac{\nu}{x}\right)\right)^{\!-1}\right)\!\right),
\end{equation*}
\begin{equation*}
J_{\nu}'(x)=\frac{2^{1/2}\left(x^2-\nu^2\right)^{1/4}}{\pi^{1/2}x} \left(\sin\left(\pi xg\!\left(\frac{\nu}{x}\right)+\frac{3\pi}{4}\right)
+O\left(\!\left(xg\left(\frac{\nu}{x}\right)\right)^{\!-1}\right)\!\right)
\end{equation*}
when $xg(\nu/x)>C$. Here the implicit constants only depends on $c$ and $C$.
\end{lemma}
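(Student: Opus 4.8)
The plan is to prove Lemma~\ref{lemmaa2} by running the argument of Lemma~\ref{lemma2} essentially verbatim, with the function $\mathrm{Bi}$ (which governs the second-kind function $Y_\nu$) replaced throughout by the Airy function $\mathrm{Ai}$ (which governs the first-kind function $J_\nu$), together with one extra overall sign in the derivative formula. Concretely, for sufficiently large $\nu$ and $1<x/\nu<1+c$ I would begin from Olver's uniform asymptotic expansion of $J_\nu(\nu z)$ and $J_\nu'(\nu z)$ past the turning point (see \cite[P.~368--369]{abram:1972}); for the derivative it reads
\[
J_\nu'(x)=-\frac{2}{x/\nu}\left(\frac{1-(x/\nu)^2}{4\zeta}\right)^{1/4}\left(\frac{\mathrm{Ai}'(\nu^{2/3}\zeta)}{\nu^{2/3}}\left(1+O(\nu^{-2})\right)+\frac{\mathrm{Ai}(\nu^{2/3}\zeta)}{\nu^{4/3}}O(1)\right),
\]
with the very same $\zeta=\zeta(x)$ as in \eqref{Olverexpansion}, determined by $\frac{2}{3}(-\zeta)^{3/2}=\sqrt{(x/\nu)^2-1}-\arccos(\nu/x)$; the companion expansion for $J_\nu(x)$ is governed instead by $\mathrm{Ai}(\nu^{2/3}\zeta)/\nu^{1/3}$ with leading factor $(4\zeta/(1-(x/\nu)^2))^{1/4}$ and subdominant term $\mathrm{Ai}'(\nu^{2/3}\zeta)/\nu^{5/3}$.

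Next I would reuse the identity $\nu^{2/3}\zeta=-\left(\frac{3\pi}{2}xg(\nu/x)\right)^{2/3}$ already established in the proof of Lemma~\ref{lemma2}, since it is a statement about $\zeta$ and $g$ alone and does not depend on which Bessel function is expanded. Simplifying the algebraic prefactors exactly as in that proof, one checks that they collapse to $(12\pi xg(\nu/x))^{1/6}/(x^2-\nu^2)^{1/4}$ for $J_\nu$ and to $-2(x^2-\nu^2)^{1/4}/\big((12\pi xg(\nu/x))^{1/6}x\big)$ for $J_\nu'$, which are precisely the prefactors in the statement. In the regime $xg(\nu/x)\le C$ the argument $\nu^{2/3}\zeta$ ranges over a fixed compact set, so $\mathrm{Ai}(\nu^{2/3}\zeta)$ and $\mathrm{Ai}'(\nu^{2/3}\zeta)$ are bounded; absorbing the subdominant Olver term then yields the first two displays of the lemma. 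The error orders are dictated by that subdominant term: after factoring out the main scalings $\nu^{-1/3}$ and $\nu^{-2/3}$, it contributes $O(\nu^{-4/3})$ to $J_\nu$ and $O(\nu^{-2/3})$ to $J_\nu'$, while the relative $O(\nu^{-2})$ correction to the leading term is negligible.

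For the regime $xg(\nu/x)>C$ I would substitute the standard large-argument asymptotics $\mathrm{Ai}(-z)\sim\pi^{-1/2}z^{-1/4}\sin\!\big(\tfrac{2}{3}z^{3/2}+\tfrac{\pi}{4}\big)$ and $\mathrm{Ai}'(-z)\sim-\pi^{-1/2}z^{1/4}\cos\!\big(\tfrac{2}{3}z^{3/2}+\tfrac{\pi}{4}\big)$ (\cite[P.~448--449]{abram:1972}), taking $z=\left(\frac{3\pi}{2}xg(\nu/x)\right)^{2/3}$ so that $\frac{2}{3}z^{3/2}=\pi xg(\nu/x)$. The $J_\nu$ case then produces $\sin(\pi xg(\nu/x)+\frac{\pi}{4})$ directly; in the $J_\nu'$ case the two sign reversals---from the $-2$ leading factor and from $\mathrm{Ai}'(-z)\sim-\pi^{-1/2}z^{1/4}\cos$---cancel, and the identity $\cos(\theta+\frac{\pi}{4})=\sin(\theta+\frac{3\pi}{4})$ converts the result into $\sin(\pi xg(\nu/x)+\frac{3\pi}{4})$, matching the last two displays.

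The main obstacle is the error control in this oscillatory regime. I must verify that both the relative error $O(z^{-3/2})=O\big((xg(\nu/x))^{-1}\big)$ coming from the Airy asymptotics and the relative size of the subdominant Olver term remain $\lesssim (xg(\nu/x))^{-1}$. The latter is the delicate point: using $\mathrm{Ai}/\mathrm{Ai}'\asymp z^{-1/2}$ in the oscillatory range, the subdominant term is relatively of size $\asymp \nu^{-2/3}(xg(\nu/x))^{-1/3}$, and turning this into the claimed $O\big((xg(\nu/x))^{-1}\big)$ requires exactly the bound $xg(\nu/x)\lesssim\nu$, which holds because $x<(1+c)\nu$ and which was already exploited in the $Y_\nu$ proof. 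Once this is in place the four displays follow, and the implicit constants depend only on $c$ and $C$ as asserted.
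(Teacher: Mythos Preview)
Your proposal is correct and follows exactly the approach the paper intends: the paper itself does not give a separate proof of Lemma~\ref{lemmaa2} but simply states that the argument is ``similar to those for $Y_\nu(x)$ and $Y_\nu'(x)$ in Lemmas~\ref{lemma1} and~\ref{lemma2},'' and you have faithfully supplied those details, replacing $\mathrm{Bi}$ by $\mathrm{Ai}$, inserting the extra sign from Olver's expansion of $J_\nu'$, and reusing the key bound $xg(\nu/x)\lesssim\nu$ for the error control in the oscillatory regime.
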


\subsection*{Acknowledgements}
I would like to thank Prof. Jingwei Guo for valuable advice.


\end{document}